\newtheorem{theorem}{Theorem}[section]
\newtheorem{lemma}[theorem]{Lemma}
\newtheorem{corollary}[theorem]{Corollary}
\numberwithin{figure}{section}
\theoremstyle{definition}
\theoremstyle{remark}
\newtheorem{remark}[theorem]{Remark}
\numberwithin{equation}{section}
	\DeclareMathOperator{\loc}{loc}
	\DeclareMathOperator{\spn}{span}
	\DeclareMathOperator{\Imag}{Im}
\begin{document}

\title[Spectral Stability]{Spectral Stability Estimates of Dirichlet Divergence Form Elliptic Operators}

\author{V.~Gol'dshtein, V.~Pchelintsev, A.~Ukhlov}

\begin{abstract}
We study spectral stability estimates of elliptic operators in divergence form $-\textrm{div} [A(w) \nabla g(w)]$ with the Dirichlet boundary condition in non-Lipschitz domains $\widetilde{\Omega} \subset \mathbb C$. The suggested method is based on connections of planar quasiconformal mappings with Sobolev spaces and its applications to the Poincar\'e inequalities.
\end{abstract}
\maketitle
\footnotetext{\textbf{Key words and phrases:} Elliptic equations, Sobolev spaces, quasiconformal mappings.} 
\footnotetext{\textbf{2010
Mathematics Subject Classification:} 35P15, 46E35, 30C60.}

\section{Introduction}

In this paper we give applications of quasiconformal mappings to spectral stability estimates of the Dirichlet eigenvalues of $A$-divergent form elliptic operators:
\begin{equation}\label{EllDivOper}
L_{A}=-\textrm{div} [A(w) \nabla g(w)], \quad w=(u,v)\in \widetilde{\Omega}, \quad g|_{\partial \widetilde{\Omega}}=0,
\end{equation}
in non-Lipschitz domains $\widetilde{\Omega} \subset \mathbb C$ with $A \in M^{2 \times 2}(\widetilde{\Omega})$. We denote, by $M^{2 \times 2}(\widetilde{\Omega})$, the class of all $2 \times 2$ symmetric matrix functions $A(w)=\left\{a_{kl}(w)\right\}$, $\textrm{det} A=1$, with measurable entries satisfying the uniform ellipticity condition
\begin{equation}\label{UEC}
\frac{1}{K}|\xi|^2 \leq \left\langle A(w) \xi, \xi \right\rangle \leq K |\xi|^2 \,\,\, \text{a.~e. in}\,\,\, \widetilde{\Omega},
\end{equation}
for every $\xi \in \mathbb C$, where $1\leq K< \infty$.
Such type elliptic operators in divergence form arise in various problems of mathematical physics (see, for example, \cite{AIM}).

In the general case quasiconformal mappings are Sobolev mappings and we will consider this eigenvalue problem in the weak formulation:
\begin{equation}\label{WFEP}
\iint\limits_{\widetilde{\Omega}} \left\langle \nabla g(w), \nabla \overline{f(w)} \right\rangle dudv
= \lambda \iint\limits_{\widetilde{\Omega}} g(w)\overline{f(w)}~dudv, \,\,\, \forall f\in W_0^{1,2}(\widetilde{\Omega}). 
\end{equation} 

It is known \cite{Henr,M} that in bounded domains $\widetilde{\Omega} \subset \mathbb C$ the  Dirichlet spectrum of the divergence form elliptic operators $-\textrm{div} [A(w) \nabla g(w)]$ is discrete and can be written in the form of a non-decreasing sequence
\[
0<\lambda_1[A,\widetilde{\Omega}] \leq \lambda_2[A,\widetilde{\Omega}] \leq \ldots \leq \lambda_n[A,\widetilde{\Omega}] \leq \ldots,
\] 
where each eigenvalue is repeated as many time as its multiplicity. 

The suggested approach to the spectral stability is based on connection between $A$-divergent form elliptic operators and quasiconformal mappings. Let us consider a domain $\Omega_e$ which is an interior of the ellipse
$$
(u,v) \in \mathbb R^2: \frac{u^2}{a^2}+\frac{v^2}{b^2}=1,
$$
where $a=\sqrt{c^2+1}+c$, $b=\sqrt{c^2+1}-c$, $c\geq 0$. Then the quasiconformal mapping  
\[
\varphi(w)= \sqrt{c^2+1}w-c \overline{w}, \quad w=u+iv,
\]
maps $\Omega_e$ onto the unit disc $\mathbb D$, and reduces by the composition rule $g=f\circ\varphi$, the divergent form spectral problem in $\Omega_e$
$$ 
-\textrm{div} [A(w) \nabla g(w)]=\lambda g, \quad g|_{\partial \Omega_e}=0,
$$
with the matrix  
$$
A(w)=\begin{pmatrix} (\sqrt{c^2+1}+c)^2 & 0 \\ 0 &  (\sqrt{c^2+1}-c)^2 \end{pmatrix},
$$ 
to the spectral problem for the Laplace operator in the unit disc $\mathbb D$
$$ 
-\Delta f=\lambda f, \quad f|_{\partial \mathbb D}=0.
$$
Hence (the detailed calculations will be given in section 6) the Dirichlet eigenvalues of the divergent form spectral problem 
$$
\lambda_{m,n}[A,\Omega_e]=\lambda_{m,n}[\mathbb D]=j^2_{m,n},\,\,\,m=0,1,2,\ldots,\,\,\, n=1,2,\ldots,
$$
where $j_{m,n}$ is the $n$-th zero of the $m$-th Bessel function $J_m$, i.e. $J_m(j_{m,n})=0$.

Hence quasiconformal type composition operators allow study spectral problems of elliptic operators. In the present paper we obtain spectral stability estimates in the case of the Dirichlet eigenvalues of the divergence form elliptic operators in non-Lipschitz domains. 
The suggested method is based on connections between the quasiconformal mappings agreed with the matrix $A$ \cite{AIM, GNR18} and composition operators on Sobolev spaces \cite{GPU2019}. The main results of the article concern to spectral stability estimates in domains that we call as $A$-quasiconformal $\beta$-regularity domains. Namely a simply connected domain $\widetilde{\Omega} \subset \mathbb C$ is called an $A$-quasiconformal $\beta$-regular domain about a simply connected domain ${\Omega} \subset \mathbb C$ if
$$
\iint\limits_{\widetilde{\Omega}} |J(w, \varphi)|^{1-\beta}~dudv < \infty, \,\,\,\beta>1,
$$
where $J(w, \varphi)$ is a Jacobian of an $A$-quasiconformal mapping $\varphi: \widetilde{\Omega}\to\Omega$. Note that $A$-quasiconformal $\beta$-regular domains satisfy the quasihyperbolic boundary condition \cite{KOT} and has finite geodesic diameter \cite{GU14}. An important subclass of $A$-quasiconformal regular domains are Ahlfors domains \cite{Ahl66}.  

The spectral stability estimates of the self-adjoint elliptic operators were intensively studied during the last decade. See, for example, \cite{BBL, BL1, BL2, BL3, BLLdeC, LP, LMS} where the history of the problem and main results in this area can be found. 
In  the previous works \cite{BGU15, BGU16}, using an approach which is based on the conformal theory of composition operators on Sobolev spaces were established the conformal spectral stability estimates of Dirichlet eigenvalues and Neumann eigenvalues of Laplacian in non-Lipschitz domains that include some fractal type domains like snowflakes.    

The main result of the article states that if a domain $\widetilde{\Omega}$ is $A$-quasiconformal $\beta$-regular about $\Omega$, then for any $n\in \mathbb N$ the following spectral stability estimates hold:
\begin{multline*}
|\lambda_n[A, \Omega]-\lambda_n[A, \widetilde{\Omega}]| 
\leq c_n A^2_{\frac{4\beta}{\beta -1},2}(\Omega) \\
\times \left(|\Omega|^{\frac{1}{2\beta}} +
\|J_{\varphi^{-1}}\,|\,L^{\beta}(\Omega)\|^{\frac{1}{2}} \right) \cdot
\|1-J_{\varphi^{-1}}^{\frac{1}{2}}\,|\,L^{2}(\Omega)\|,
\end{multline*}
where
$c_n=\max\left\{\lambda_n^2[A, \Omega], \lambda_n^2[A, \widetilde{\Omega}]\right\}$,  $J_{\varphi^{-1}}$ is a Jacobian of an $A^{-1}$-quasiconformal mapping $\varphi^{-1}:\Omega\to\widetilde{\Omega}$, and
\[
A_{\frac{4\beta}{\beta -1},2}(\Omega) \leq \inf\limits_{p\in \left(\frac{4\beta}{3\beta -1},2\right)} 
\left(\frac{p-1}{2-p}\right)^{\frac{p-1}{p}}
\frac{\left(\sqrt{\pi}\cdot\sqrt[p]{2}\right)^{-1}|\Omega|^{\frac{\beta-1}{4\beta}}}{\sqrt{\Gamma(2/p) \Gamma(3-2/p)}}~~.
\]

In case of quasiconformal diffeomorphisms $\varphi:\widetilde{\Omega} \to \Omega$ for which $|J(w,\varphi)|=1$ 
the operator \eqref{EllDivOper} can be rewritten as
\begin{equation}\label{quasiconf-eq}
-\textrm{div} [A(w) \nabla g(w)] =-\Delta f(z).
\end{equation}
This means that 
the Dirichlet eigenvalues of the divergence form elliptic operator $-\textrm{div} [A(w) \nabla g(w)]$ in a domain $\widetilde{\Omega}$ are equal to the Dirichlet eigenvalues of the Laplace operator in a domain $\Omega$. Note that such type operators are called ``\,isospectral''. 
Examples of so-called isospectral operators will be given in Section 6. 

The method proposed to investigation the weak weight eigenvalue problem for the Dirichlet Laplacian is based on the Sobolev embedding theorems \cite{GG94,GU09} in connection with the composition operators on Sobolev spaces \cite{U93, VU02}.

\section{Sobolev spaces and $A$-quasiconformal mappings}

Let $E \subset \mathbb C$ be a measurable set on the complex plane and $h:E \to \mathbb R$ be a positive a.e. locally integrable function i.e. a weight. The weighted Lebesgue space $L^p(E,h)$, $1\leq p<\infty$, 
is the space of all locally integrable functions with the finite norm
$$
\|f\,|\,L^{p}(E,h)\|= \left(\iint\limits_E|f(z)|^ph(z)\,dxdy \right)^{\frac{1}{p}}< \infty.
$$

The two-weighted Sobolev space $W^{1,p}(\Omega,h,1)$, $1\leq p< \infty$, is defined
as the normed space of all locally integrable weakly differentiable functions
$f:\Omega\to\mathbb{R}$ endowed with the following norm:
\[
\|f\mid W^{1,p}(\Omega,h,1)\|=\|f\,|\,L^{p}(\Omega,h)\|+\|\nabla f\mid L^{p}(\Omega)\|.
\]

In the case $h=1$ this weighted Sobolev space coincides with the classical Sobolev space $W^{1,p}(\Omega)$. 
The seminormed Sobolev space $L^{1,p}(\Omega)$, $1\leq p< \infty$,
is the space of all locally integrable weakly differentiable functions $f:\Omega\to\mathbb{R}$ endowed
with the following seminorm: 
\[
\|f\mid L^{1,p}(\Omega)\|=\|\nabla f\mid L^p(\Omega)\|, \,\, 1\leq p<\infty.
\]

We also need a weighted seminormed Sobolev space $L^{1,2}(\Omega, A)$ (associated with the matrix $A$), defined 
as the space of all locally integrable weakly differentiable functions $f:\Omega\to\mathbb{R}$
with the finite seminorm given by:  
\[
\|f\mid L^{1,2}(\Omega,A)\|=\left(\iint\limits_\Omega \left\langle A(z)\nabla f(z),\nabla f(z)\right\rangle\,dxdy \right)^{\frac{1}{2}}.
\]

The corresponding  Sobolev space $W^{1,2}(\Omega, A)$ is defined
as the normed space of all locally integrable weakly differentiable functions
$f:\Omega\to\mathbb{R}$ endowed with the following norm:
\[
\|f\mid W^{1,2}(\Omega, A)\|=\|f\,|\,L^{2}(\Omega)\|+\|f\mid L^{1,2}(\Omega,A)\|.
\]
The Sobolev space $W^{1,2}_{0}(\Omega, A)$ is the closure in the $W^{1,2}(\Omega, A)$-norm of the 
space $C^{\infty}_{0}(\Omega)$.

Recall that a homeomorphism $\varphi: \widetilde{\Omega}\to \Omega$, $\widetilde{\Omega},\, \Omega \subset\mathbb C$, is called a $K$-quasiconformal mapping if $\varphi\in W^{1,2}_{\loc}(\widetilde{\Omega})$ and there exists a constant $1\leq K<\infty$ such that
$$
|D\varphi(w)|^2\leq K |J(w,\varphi)|\,\,\text{for almost all}\,\,w \in \widetilde{\Omega}.
$$

Now we give a construction of $A$-quasiconformal mappings connected with the $A$-divergent form elliptic operators.
The basic idea is that every positive quadratic form
\[
ds^2=a_{11}(u,v)du^2+2a_{12}(u,v)dudv+a_{22}(u,v)dv^2
\]
defined in a planar domain $\widetilde{\Omega}$ can be reduced, by means of a quasiconformal change of variables, to the canonical form
\[
ds^2=\Lambda (dx^2+dy^2),\,\, \Lambda \neq 0,\,\, \text{a.e. in}\,\, \Omega,
\]
given that $a_{11}a_{22}-a^2_{12} \geq \kappa_0>0$, $a_{11}>0$, almost everywhere in $\widetilde{\Omega}$ \cite{Ahl66, BGMR}. Note that this fact can be extended to linear operators of the form $\textrm{div} [A(w) \nabla g(w)]$, $w=u+iv$, for matrix function $A \in M^{2 \times 2}(\widetilde{\Omega})$.

Let $\xi(w)=\Re(\varphi(w))$ be a real part of a quasiconformal mapping $\varphi(w)=\xi(w)+i \eta(w)$, which satisfies to the Beltrami equation:
\begin{equation}\label{BelEq}
\varphi_{\overline{w}}(w)=\mu(w) \varphi_{w}(w),\,\,\, \text{a.e. in}\,\,\, \widetilde{\Omega},
\end{equation}
where
$$
\varphi_{w}=\frac{1}{2}\left(\frac{\partial \varphi}{\partial u}-i\frac{\partial \varphi}{\partial v}\right) \quad \text{and} \quad 
\varphi_{\overline{w}}=\frac{1}{2}\left(\frac{\partial \varphi}{\partial u}+i\frac{\partial \varphi}{\partial v}\right),
$$
with the complex dilatation $\mu(w)$ is given by
\begin{equation}\label{ComDil}
\mu(w)=\frac{a_{22}(w)-a_{11}(w)-2ia_{12}(w)}{\det(I+A(w))},\quad I= \begin{pmatrix} 1 & 0 \\ 0 & 1 \end{pmatrix}.
\end{equation}

Then the uniform ellipticity condition \eqref{UEC} can be written as
\begin{equation}\label{OVCE}
|\mu(w)|\leq \frac{K-1}{K+1},\,\,\, \text{a.e. in}\,\,\, \widetilde{\Omega}.
\end{equation}

Conversely we can obtain from \eqref{ComDil} (see, for example, \cite{AIM}, p. 412) that :
\begin{equation}\label{Matrix-F}
A(w)= \begin{pmatrix} \frac{|1-\mu|^2}{1-|\mu|^2} & \frac{-2 \Imag \mu}{1-|\mu|^2} \\ \frac{-2 \Imag \mu}{1-|\mu|^2} &  \frac{|1+\mu|^2}{1-|\mu|^2} \end{pmatrix},\,\,\, \text{a.e. in}\,\,\, \widetilde{\Omega}.
\end{equation}

So, given any $A \in M^{2 \times 2}(\widetilde{\Omega})$, one produced, by \eqref{OVCE}, the complex dilatation $\mu(w)$, for which, in turn, the Beltrami equation \eqref{BelEq} induces a quasiconformal homeomorphism $\varphi:\widetilde{\Omega} \to \Omega$ as its solution, by the Riemann measurable mapping theorem (see, for example, \cite{Ahl66}). We will say that the matrix function $A$ induces the corresponding $A$-quasiconformal homeomorphism $\varphi$ or that $A$ and $\varphi$ are agreed. 

So, by the given $A$-divergent form elliptic operator defined in a domain $\widetilde{\Omega}\subset\mathbb C$ we construct so-called a $A$-quasiconformal mapping $\varphi:\widetilde{\Omega}\to\Omega$ with a quasiconformal coefficient
$$
K=\frac{1+\|\mu\mid L^{\infty}(\widetilde{\Omega})\|}{1-\|\mu\mid L^{\infty}(\widetilde{\Omega})\|},
$$
where $\mu$ defined by (\ref{ComDil}).

Note that the inverse mapping to the $A$-quasiconformal mapping $\varphi: \widetilde{\Omega} \to \Omega$ is the $A^{-1}$-quasiconformal mapping \cite{GPU2019}.

In \cite{GPU2019} was given a connection between composition operators on Sobolev spaces and $A$-quasiconformal mappings. 

\begin{theorem} \label{L4.1}
Let $\Omega,\widetilde{\Omega}$ be domains in $\mathbb C$. Then a homeomorphism $\varphi :\widetilde{\Omega} \to \Omega$ is an $A$-quasiconformal mapping 
if and only if $\varphi$ induces, by the composition rule $\varphi^{*}(f)=f \circ \varphi$,
an isometry of Sobolev spaces $L^{1,2}(\widetilde{\Omega},A)$ and $L^{1,2}(\Omega)$:
\[
\|\varphi^{*}(f)\,|\,L^{1,2}(\widetilde{\Omega},A)\|=\|f\,|\,L^{1,2}(\Omega)\|
\]
for any $f \in L^{1,2}(\Omega)$.
\end{theorem}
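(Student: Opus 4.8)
The plan is to prove the two implications separately; in both, the computational core is the pointwise identity
\[
D\varphi(w)\,A(w)\,D\varphi(w)^{T}=|J(w,\varphi)|\,I\qquad\text{for a.e. }w\in\widetilde\Omega,
\]
valid exactly when $A$ and $\varphi$ are agreed through \eqref{ComDil}--\eqref{Matrix-F}. Indeed, writing $\varphi=\xi+i\eta$ and using $\varphi_{u}=\varphi_{w}+\varphi_{\overline w}=(1+\mu)\varphi_{w}$, $\varphi_{v}=i(\varphi_{w}-\varphi_{\overline w})=i(1-\mu)\varphi_{w}$, a short substitution into \eqref{Matrix-F} gives $A(w)=|J(w,\varphi)|\,(D\varphi(w)^{T}D\varphi(w))^{-1}$ a.e.; multiplying by $D\varphi(w)$ on the left and $D\varphi(w)^{T}$ on the right (both being invertible a.e.) yields the identity.

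\emph{$A$-quasiconformality $\Rightarrow$ isometry.} Let $\varphi$ be $A$-quasiconformal. Then $\varphi$ is differentiable a.e., has Luzin's $(N)$-property and $J(\cdot,\varphi)\in L^{1}_{\loc}(\widetilde\Omega)$. For $f\in C^{\infty}(\Omega)\cap L^{1,2}(\Omega)$ put $g:=\varphi^{*}(f)=f\circ\varphi\in L^{1,2}_{\loc}(\widetilde\Omega)$; the chain rule gives $\nabla g(w)=D\varphi(w)^{T}(\nabla f)(\varphi(w))$ a.e., hence
\begin{multline*}
\langle A(w)\nabla g(w),\nabla g(w)\rangle
=\langle D\varphi(w)A(w)D\varphi(w)^{T}(\nabla f)(\varphi(w)),(\nabla f)(\varphi(w))\rangle\\
=|J(w,\varphi)|\,|(\nabla f)(\varphi(w))|^{2}
\end{multline*}
by the identity above. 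Integrating over $\widetilde\Omega$ and changing variables (legitimate by the $(N)$-property) gives $\|g\,|\,L^{1,2}(\widetilde\Omega,A)\|=\|f\,|\,L^{1,2}(\Omega)\|$. Since smooth functions are dense in $L^{1,2}(\Omega)$ in the seminorm and $\varphi^{*}$ is distance preserving on them by the computation above, it extends to the asserted isometry, consistent with the pointwise formula $\varphi^{*}(f)=f\circ\varphi$ (well defined on $L^{1,2}$ since $\varphi,\varphi^{-1}$ preserve null sets).

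\emph{Isometry $\Rightarrow$ $A$-quasiconformality.} Assume $\varphi^{*}$ is an isometry. First I would show $\varphi\in W^{1,2}_{\loc}(\widetilde\Omega)$: for a ball $B\Subset\Omega$ pick $\psi\in C_{0}^{\infty}(\Omega)$ with $\psi\equiv1$ on $B$ and apply $\varphi^{*}$ to $(x,y)\mapsto x\,\psi(x,y)$ and $(x,y)\mapsto y\,\psi(x,y)$; the isometry identity together with \eqref{UEC} (which makes $\|\cdot\,|\,L^{1,2}(\cdot,A)\|$ comparable with the unweighted seminorm) bounds $\|\nabla\xi\,|\,L^{2}(\varphi^{-1}(B))\|$ and $\|\nabla\eta\,|\,L^{2}(\varphi^{-1}(B))\|$, and exhausting $\widetilde\Omega$ gives the claim. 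Being a planar $W^{1,2}_{\loc}$ homeomorphism, $\varphi$ is differentiable a.e., and for $f\in C_{0}^{\infty}(\Omega)$ the chain rule applies as before. Writing $B(w):=D\varphi(w)A(w)D\varphi(w)^{T}$, the isometry identity becomes
\begin{multline*}
\iint\limits_{\widetilde\Omega}\langle B(w)(\nabla f)(\varphi(w)),(\nabla f)(\varphi(w))\rangle\,dudv\\
=\iint\limits_{\Omega}|\nabla f(z)|^{2}\,dxdy,\qquad f\in C_{0}^{\infty}(\Omega).
\end{multline*}
Testing with $f$ affine on small balls around Lebesgue points of $B$ and of $J(\cdot,\varphi)$, together with the area formula for $\varphi$, forces $B(w)=|J(w,\varphi)|\,I$ a.e. Reversing the algebra of the first paragraph, $A(w)=|J(w,\varphi)|\,(D\varphi^{T}D\varphi)^{-1}$, so \eqref{Matrix-F} identifies the complex dilatation of $\varphi$ with $\mu$, while by \eqref{UEC} the eigenvalues of $A^{-1}=D\varphi^{T}D\varphi/|J|$ lie in $[1/K,K]$, whence $|D\varphi(w)|^{2}\le K|J(w,\varphi)|$ a.e. and $\varphi$ is $A$-quasiconformal.

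\emph{Expected main obstacle.} The delicate part is the second implication, namely converting the single integral identity into the pointwise identity $B(w)=|J(w,\varphi)|I$. This requires the change of variables (area formula with the right multiplicity) for the a priori merely $W^{1,2}_{\loc}$ homeomorphism $\varphi$ --- in particular one must argue that the isometry, having operator norm exactly one, forces $\varphi$ to enjoy Luzin's $(N)$-property --- and it requires that the pullbacks under $\varphi$ of gradients $\nabla f$, $f\in C_{0}^{\infty}(\Omega)$, are rich enough to separate symmetric matrices at a.e. point of $\widetilde\Omega$. These are exactly the facts furnished by the theory of composition operators on Sobolev spaces (\cite{U93,VU02,GPU2019}), which I would invoke rather than redevelop here.
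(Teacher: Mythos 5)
First, a point of comparison: the paper itself does not prove Theorem~\ref{L4.1} at all --- it is imported verbatim from \cite{GPU2019} --- so the only internal check available is consistency with the framework of Section~2, and your argument is indeed built on the right algebraic heart: the identity $D\varphi(w)A(w)D\varphi(w)^{T}=|J(w,\varphi)|\,I$ a.e., equivalently $A=|J(\cdot,\varphi)|\,(D\varphi^{T}D\varphi)^{-1}$, which is exactly what \eqref{ComDil}--\eqref{Matrix-F} encode (your computation of it is correct, and it is also what the cited proof in \cite{GPU2019} runs on). Your forward implication ($A$-quasiconformality $\Rightarrow$ isometry) is essentially complete: chain rule for smooth $f$, the pointwise identity, the change of variables (legitimate since quasiconformal maps are a.e.\ differentiable and satisfy Luzin's $(N)$), and a density argument whose identification of the limit with $f\circ\varphi$ you at least flag. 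That half I would accept.

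The converse, however, is where the real content of the theorem lies, and there your text is an outline rather than a proof. Two specific gaps: (a) you assert that ``the isometry, having operator norm exactly one, forces $\varphi$ to enjoy Luzin's $(N)$-property''; this is not justified and is not obviously true for an a priori merely $W^{1,2}_{\loc}$ homeomorphism, and without it (or without the capacity-based machinery that replaces it) the change-of-variables step you need is unavailable --- you must also rule out $|\{J(\cdot,\varphi)=0\}|>0$ before inverting $D\varphi^{T}D\varphi$; (b) the passage from the single integral identity to the pointwise identity $B(w)=|J(w,\varphi)|I$ is only gestured at (``testing with $f$ affine on small balls''): affine functions are not in $C_{0}^{\infty}(\Omega)$, one needs polarization of the quadratic form, cutoffs with controlled error, and a Lebesgue-point argument, all resting on the very change-of-variables formula from (a). Deferring these steps to ``the theory of composition operators'' is in principle legitimate, but citing \cite{GPU2019} for them is circular, since that is precisely the source of the theorem being proved. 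A clean repair of your converse is: use \eqref{UEC} to see that the isometry makes $\varphi^{*}:L^{1,2}(\Omega)\to L^{1,2}(\widetilde{\Omega})$ bounded with bounded inverse, invoke \cite{VG75} to conclude that $\varphi$ is quasiconformal (which gives a.e.\ differentiability, $(N)$, $J>0$ a.e.\ and the change of variables for free), and only then run your localization argument to obtain $D\varphi A D\varphi^{T}=|J|I$ a.e., which identifies the complex dilatation of $\varphi$ with the $\mu$ of \eqref{ComDil} and shows $\varphi$ is $A$-quasiconformal. As written, the converse direction has a genuine gap.
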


This theorem generalizes the well known property of conformal mappings generate the isometry of uniform Sobolev spaces $L^1_2(\Omega)$ and $L^1_2(\widetilde{\Omega})$ (see, for example, \cite{C50}) and refines (in the case $n=2$) the functional characterization of quasiconformal mappings in the terms of isomorphisms of uniform Sobolev spaces \cite{VG75}.

\section{Weighted eigenvalue problems}

In this section we estimate variation of Dirichlet eigenvalues of two weighted spectral problems defined in a domain $\Omega\subset\mathbb R^n$. Let $w_1$, $w_2$ be positive a.e. locally integrable functions (i.e. weights) defined in a domain $\Omega$. We assume that weighted embedding operators
\[
i_{\Omega}:W_0^{1,2}(\Omega) \hookrightarrow L^2(\Omega, w_k),\,\, k=1,2,
\]
are compact (see, for example, \cite{M}). Then weighted eigenvalue problems
\[
\iint\limits _{\Omega} \left\langle \nabla f(z), \nabla\overline{{g(z)}}\right\rangle~dxdy=\lambda\iint\limits _{\Omega}w_{1}(z)f(z)\overline{g(z)}~dxdy\,,\,\,~~\forall g\in W_{0}^{1,2}(\Omega)
\]
 and
\[
\iint\limits _{\Omega} \left\langle \nabla f(z), \nabla\overline{{g(z)}}\right\rangle~dxdy=\lambda\iint\limits _{\Omega}w_{2}(z)f(z)\overline{{g(z)}}~dxdy\,,\,\,~~\forall g\in W_{0}^{1,2}(\Omega).
\]
are solvable and eigenvalues can be characterized by the Min-Max Principle (see, for example, \cite{Henr}).

The following result in the case of hyperbolic (conformal) weights was proved in (\cite{BGU15}, Lemma 3.1). In the present article we formulate this lemma in the case of general locally integrable weights.

\begin{lemma}
 \label{lem:TwoWeight} Let $\Omega \subset \mathbb{C}$ be a bounded simply connected domain
and let $w_{1}$, $w_{2}$ be positive a.e. locally integrable functions defined in $\Omega$.
Suppose that there exists a constant $B>0$ such that
\begin{equation}
\iint\limits _{\Omega}|w_{1}(z)-w_{2}(z)||f|^{2}~dxdy\leq B\iint\limits _{\Omega}|\nabla f|^{2}~dxdy,\,\,
\forall f\in W_{0}^{1,2}(\Omega).\label{EqvWW}
\end{equation}

Then for any $n\in\mathbb{N}$
\begin{equation}
|\lambda_{n}[w_{1},\Omega]-\lambda_{n}[w_{2},\Omega]|\leq \frac{B\tilde c_n}{1+B\sqrt{\tilde c_n}}< B\tilde c_n\,,\label{LemEq}
\end{equation}
where
\begin{equation}\label{tilde c_n}
\tilde c_n=\max\{\lambda_{n}^{2}[w_{1},\Omega],\lambda_{n}^{2}[w_{2},\Omega]\} \,.
\end{equation}
\end{lemma}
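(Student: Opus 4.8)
The plan is to pass to the reciprocal eigenvalues $\lambda_{n}^{-1}[w_{k},\Omega]$, for which the hypothesis \eqref{EqvWW} turns into an \emph{additive} perturbation estimate, and then to conclude by an elementary algebraic manipulation.

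First I would record the Min-Max Principle in the form adapted to the two weighted problems. Since $\Omega$ is bounded, $\|\nabla f\,|\,L^{2}(\Omega)\|$ is a norm on $W_{0}^{1,2}(\Omega)$, and compactness of the embeddings $i_{\Omega}:W_{0}^{1,2}(\Omega)\hookrightarrow L^{2}(\Omega,w_{k})$ guarantees that
\[
\lambda_{n}[w_{k},\Omega]=\min_{\substack{V\subset W_{0}^{1,2}(\Omega)\\ \dim V=n}}\ \max_{0\neq f\in V}\ \frac{\iint\limits_{\Omega}|\nabla f|^{2}\,dxdy}{\iint\limits_{\Omega}w_{k}|f|^{2}\,dxdy},\qquad k=1,2 .
\]
Since $t\mapsto t^{-1}$ is decreasing on $(0,\infty)$, this is equivalent to
\[
\frac{1}{\lambda_{n}[w_{k},\Omega]}=\max_{\substack{V\subset W_{0}^{1,2}(\Omega)\\ \dim V=n}}\ \min_{0\neq f\in V}\ \frac{\iint\limits_{\Omega}w_{k}|f|^{2}\,dxdy}{\iint\limits_{\Omega}|\nabla f|^{2}\,dxdy},\qquad k=1,2 .
\]

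Next, for a fixed $n$-dimensional subspace $V\subset W_{0}^{1,2}(\Omega)$ and any $0\neq f\in V$ I would divide \eqref{EqvWW} by $\iint_{\Omega}|\nabla f|^{2}\,dxdy>0$ and use $w_{1}-w_{2}\leq|w_{1}-w_{2}|$ to get
\[
\frac{\iint\limits_{\Omega}w_{1}|f|^{2}\,dxdy}{\iint\limits_{\Omega}|\nabla f|^{2}\,dxdy}\ \leq\ \frac{\iint\limits_{\Omega}w_{2}|f|^{2}\,dxdy}{\iint\limits_{\Omega}|\nabla f|^{2}\,dxdy}+B .
\]
Taking the infimum over $0\neq f\in V$ and then the supremum over all $n$-dimensional subspaces $V$ yields $\lambda_{n}^{-1}[w_{1},\Omega]\leq\lambda_{n}^{-1}[w_{2},\Omega]+B$; since \eqref{EqvWW} is symmetric in $w_{1},w_{2}$, interchanging the two weights gives the reverse inequality, so
\[
\left|\frac{1}{\lambda_{n}[w_{1},\Omega]}-\frac{1}{\lambda_{n}[w_{2},\Omega]}\right|\leq B .
\]

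Finally I would unwind this bound. Write $a=\lambda_{n}[w_{1},\Omega]>0$, $b=\lambda_{n}[w_{2},\Omega]>0$, and assume without loss of generality $a\geq b$, so that $\tilde c_{n}=a^{2}$ and $\sqrt{\tilde c_{n}}=a$. The last inequality reads $a-b\leq Bab$, hence $a\leq b(1+Ba)$, i.e. $b\geq a/(1+Ba)$, and therefore
\[
|\lambda_{n}[w_{1},\Omega]-\lambda_{n}[w_{2},\Omega]|=a-b\leq a-\frac{a}{1+Ba}=\frac{Ba^{2}}{1+Ba}=\frac{B\tilde c_{n}}{1+B\sqrt{\tilde c_{n}}}<B\tilde c_{n},
\]
which is \eqref{LemEq}. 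The one step that really needs care is the first: confirming that the weighted Rayleigh quotients admit the classical Min-Max, hence the dual Max-Min, characterization — this is where boundedness of $\Omega$ (Friedrichs inequality, so that $\|\nabla\cdot\,|\,L^{2}(\Omega)\|$ is a genuine norm on $W_{0}^{1,2}(\Omega)$) and the standing compactness of the embeddings $i_{\Omega}$ enter; once that is granted, the argument is a single use of \eqref{EqvWW} followed by the elementary estimates above.
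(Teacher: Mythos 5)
Your proof is correct, and it takes a genuinely different (and arguably cleaner) route than the paper's. The paper argues directly with the eigenvalues: it fixes the span $M_n^{(1)}$ of the first $n$ eigenfunctions of the $w_1$-problem, uses \eqref{EqvWW} to perturb the denominator of the Rayleigh quotient, and then needs the monotonicity of $F(t)=t/(1+Bt)$ together with a sup--inf splitting and the Min--Max principle \eqref{MinMax} to compare with $\lambda_n[w_2,\Omega]$; this produces the two one-sided bounds \eqref{LemEqR}--\eqref{LemEqL} that combine to \eqref{LemEq}. You instead invert the Courant--Fischer characterization, so that \eqref{EqvWW} becomes an \emph{additive} perturbation of the reciprocal Rayleigh quotients, giving $\bigl|\lambda_n^{-1}[w_1,\Omega]-\lambda_n^{-1}[w_2,\Omega]\bigr|\le B$ at once, and the stated estimate then follows by the elementary algebra at the end; in fact your intermediate inequality $a-b\le Bab$ is exactly equivalent to the bound $a-b\le B\tilde c_n/(1+B\sqrt{\tilde c_n})$, so nothing is lost in passing to reciprocals. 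What your route buys: it bypasses the somewhat delicate chain of sup/inf manipulations and the $F(t)$-monotonicity step, treats $w_1$ and $w_2$ symmetrically in one stroke, and isolates the conceptually transparent statement that the hypothesis is a stability bound for the reciprocal eigenvalues. What the paper's route buys: it only needs the supremum over the eigenfunction span of one problem (formula \eqref{MaxPr}) plus the min--max lower bound for the other, i.e.\ marginally less of the variational machinery for the weighted problems. Both arguments rest on the standing assumption that the embeddings $W_0^{1,2}(\Omega)\hookrightarrow L^2(\Omega,w_k)$ are compact (so the spectra are discrete and the variational characterizations hold), which you correctly identify as the only point where boundedness of $\Omega$ and compactness enter; your closing algebra coincides with the paper's and correctly yields the strict inequality $<B\tilde c_n$.
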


\begin{proof} By (\ref{MaxPr})
\[
\lambda_{n}[w_{1},\Omega]=\sup\limits _{\substack{f\in M_{n}^{(1)}\\
f\ne0
}
}\frac{\iint\limits _{\Omega}|\nabla f|^{2}~dxdy}{\iint\limits _{\Omega}w_{1}(z)|f|^{2}~dxdy}\,,
\]
 where
\[
M_{n}^{(1)}={\rm span}\,\{\varphi_{1}[w_{1}],...\varphi_{n}[w_{1}]\}.
\]
 Hence by (\ref{EqvWW})
\begin{multline*}
\lambda_{n}[w_{1},\Omega]\geq\sup\limits _{\substack{f\in M_{n}^{(1)}\\
f\ne0
}
}\frac{\iint\limits _{\Omega}|\nabla f|^{2}~dxdy}{\iint\limits _{\Omega}w_{2}(z)|f|^{2}~dxdy+\iint\limits _{\Omega}|w_{1}(z)-w_{2}(z)||f|^{2}~dxdy}\\
\geq\sup\limits _{\substack{f\in M_{n}^{(1)}\\
f\ne0
}
}\frac{\iint\limits_{\Omega}|\nabla f|^{2}~dxdy}{\iint\limits _{\Omega}w_{2}(z)|f|^{2}~dxdy+B\iint\limits _{\Omega}|\nabla f|^{2}~dxdy}\\
=\sup\limits _{\substack{f\in M_{n}^{(1)}\\
f\ne0
}
}\frac{\iint\limits _{\Omega}|\nabla f|^{2}~dxdy}{\iint\limits_{\Omega}w_{2}(z)|f|^{2}~dxdy}\cdot\frac{1}{1+B\frac{\iint\limits _{\Omega}|\nabla f|^{2}~dxdy}{\iint\limits _{\Omega}w_{2}(z)|f|^{2}~dxdy}}\\
\geq\sup\limits _{\substack{f\in M_{n}^{(1)}\\
f\ne0
}
}\frac{\iint\limits _{\Omega}|\nabla f|^{2}~dxdy}{\iint\limits _{\Omega}w_{2}(z)|f|^{2}~dxdy}\cdot\inf\limits _{\substack{f\in M_{n}^{(1)}\\
f\ne0
}
}\frac{1}{1+B\frac{\iint\limits _{\Omega}|\nabla f|^{2}~dxdy}{\iint\limits _{\Omega}w_{2}(z)|f|^{2}~dxdy}}\\
=\sup\limits _{\substack{f\in M_{n}^{(1)}\\
f\ne0
}
}\frac{\iint\limits _{\Omega}|\nabla f|^{2}~dxdy}{\iint\limits _{\Omega}w_{2}(z)|f|^{2}~dxdy}\cdot\frac{1}{1+B\sup\limits _{\substack{f\in M_{n}^{(1)}\\
f\ne0}}\frac{\iint\limits _{\Omega}|\nabla f|^{2}~dxdy}{\iint\limits _{\Omega}w_{2}(z)|f|^{2}~dxdy}}.
\end{multline*}
 Since the function $F(t)={t}/{(1+Bt)}$ is non-decreasing on $[0,\infty)$
and by (\ref{MinMax})
\[
\sup\limits _{\substack{f\in M_{n}^{(1)}\\
f\ne0
}
}\frac{\iint\limits _{\Omega}|\nabla f|^{2}~dxdy}{\iint\limits _{\Omega}w_{2}(z)|f|^{2}~dxdy}\geq\lambda_{n}[w_{2},\Omega],
\]
 it follows that
\[
\lambda_{n}[w_{1},\Omega]\geq\frac{\lambda_{n}[w_{2},\Omega]}{1+B\lambda_{n}[w_{2},\Omega]}=\lambda_{n}[w_{2},\Omega]- \frac{B\lambda_{n}^2[w_{2},\Omega]}{1+B\lambda_{n}[w_{2},\Omega]}.
\]
 Hence
\begin{equation}
\lambda_{n}[w_{1},\Omega]-\lambda_{n}[w_{2},\Omega]\geq-\frac{B\lambda_{n}^2[w_{2},\Omega]}{1+B\lambda_{n}[w_{2},\Omega]}\geq-\frac{B\tilde c_n}{1+B\sqrt{\tilde c_n}}.\label{LemEqR}
\end{equation}
 For similar reasons
\[
\lambda_{n}[w_{2},\Omega]-\lambda_{n}[w_{1},\Omega]\geq -\frac{B\lambda_{n}[w_{1},\Omega]}{1+B\lambda_{n}^2[w_{1},\Omega]}\geq-\frac{B\tilde c_n}{1+B\sqrt{\tilde c_n}}
\]
 or
\begin{equation}
\lambda_{n}[w_{1},\Omega]-\lambda_{n}[w_{2},\Omega]\leq \frac{B\lambda_{n}[w_{1},\Omega]}{1+B\lambda_{n}[w_{1},\Omega]}\le \frac{B\tilde c_n}{1+B\sqrt{\tilde c_n}}.\label{LemEqL}
\end{equation}

Inequalities (\ref{LemEqR}) and (\ref{LemEqL}) imply inequality
(\ref{LemEq}). 
\end{proof}

\vskip 0.2cm

Further we estimate the constant $B$ in Lemma \ref{EqvWW} in terms of an $L^s$-distance between weights.

Firstly we estimate the constant in the corresponding Poincar\'e-Sobolev inequality for a bounded domain $\Omega\subset\mathbb R^2$. 
\begin{theorem}
\label{PoinConst}
Let $\Omega\subset\mathbb R^2$ be a bounded domain and $f \in W^{1,2}_0(\Omega)$. Then 
\begin{equation}\label{InPS}
\|f \mid L^{r}(\Omega)\| \leq A_{r,2}(\Omega) \|\nabla f \mid L^{2}(\Omega)\|, \,\,r \geq 2,
\end{equation}
where
\[
A_{r,2}(\Omega) \leq \inf\limits_{p\in \left(\frac{2r}{r+2},2\right)} 
\left(\frac{p-1}{2-p}\right)^{\frac{p-1}{p}}
\frac{\left(\sqrt{\pi}\cdot\sqrt[p]{2}\right)^{-1}|\Omega|^{\frac{1}{r}}}{\sqrt{\Gamma(2/p) \Gamma(3-2/p)}}.
\]
\end{theorem}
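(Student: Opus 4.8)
The plan is to obtain the Poincaré–Sobolev inequality \eqref{InPS} by interpolating between a Sobolev embedding on the $W^{1,p}_0$ scale for $p<2$ and the trivial $L^2$-control of the gradient, and then to track constants sharply. The starting point is the sharp Gagliardo–Nirenberg–Sobolev inequality on $\mathbb R^2$: for $1<p<2$ and $p^\ast = \frac{2p}{2-p}$ one has $\|f\mid L^{p^\ast}(\mathbb R^2)\|\le C(p)\,\|\nabla f\mid L^p(\mathbb R^2)\|$ for all $f\in C_0^\infty(\mathbb R^2)$, with the best constant $C(p)$ given by Talenti's formula, which in dimension $2$ reduces (after the usual simplifications of the Gamma factors) to
\[
C(p)=\pi^{-\frac12}\Bigl(\tfrac{p-1}{2-p}\Bigr)^{\frac{p-1}{p}}\Bigl(\tfrac{1}{2}\Bigr)^{\frac1p}\bigl(\Gamma(2/p)\Gamma(3-2/p)\bigr)^{-\frac12}.
\]
This is exactly the coefficient appearing in the statement, so the first step is simply to quote Talenti's sharp constant and record this identity.

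Next I would pass from $L^p$ of the gradient to $L^2$ of the gradient on the bounded domain $\Omega$. Since $p<2$ and $|\Omega|<\infty$, Hölder's inequality gives $\|\nabla f\mid L^p(\Omega)\|\le |\Omega|^{\frac1p-\frac12}\,\|\nabla f\mid L^2(\Omega)\|$. Thus for every $f\in W^{1,2}_0(\Omega)$, extending $f$ by zero to all of $\mathbb R^2$ and applying the GNS inequality,
\[
\|f\mid L^{p^\ast}(\Omega)\|\le C(p)\,|\Omega|^{\frac1p-\frac12}\,\|\nabla f\mid L^2(\Omega)\|.
\]
Now I want $L^r(\Omega)$ on the left with a prescribed $r\ge 2$. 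If $r\le p^\ast$, i.e. if $p\ge \frac{2r}{r+2}$, then another application of Hölder on the bounded domain yields $\|f\mid L^r(\Omega)\|\le |\Omega|^{\frac1r-\frac1{p^\ast}}\,\|f\mid L^{p^\ast}(\Omega)\|$. Combining the two Hölder steps, the exponents of $|\Omega|$ add up: $(\frac1p-\frac12)+(\frac1r-\frac1{p^\ast}) = (\frac1p-\frac12)+(\frac1r-\frac1p+\frac12)=\frac1r$. Hence
\[
\|f\mid L^r(\Omega)\|\le C(p)\,|\Omega|^{\frac1r}\,\|\nabla f\mid L^2(\Omega)\|,
\]
valid for every admissible $p$. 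The constraint on $p$ is $\frac{2r}{r+2}\le p<2$; taking the infimum over $p$ in the open interval $\bigl(\frac{2r}{r+2},2\bigr)$ gives precisely the claimed bound on $A_{r,2}(\Omega)$.

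The only genuinely delicate point is the bookkeeping of the sharp constant: one must verify that Talenti's general formula for the best Sobolev constant, specialized to $n=2$ and to exponent $p\in(1,2)$, collapses to the displayed expression with the two Gamma factors $\Gamma(2/p)$ and $\Gamma(3-2/p)$; this uses the reflection/duplication identities for the Gamma function and the normalization of the volume of the unit disc. Everything else — the two Hölder estimates, the additivity of the $|\Omega|$-exponents, the zero-extension of a $W^{1,2}_0$ function — is routine, and the passage to the infimum over $p$ is immediate since each choice of $p$ yields a valid inequality. I would also remark that the endpoint $p\to 2$ is excluded because $C(p)\to\infty$ there, while $p\to\frac{2r}{r+2}$ is excluded so that $r<p^\ast$ strictly and the intermediate Hölder step is non-degenerate; the infimum over the open interval is what one actually uses later when optimizing.
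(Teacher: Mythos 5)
Your proposal is correct and follows essentially the same route as the paper: extend by zero, apply Talenti's sharp Sobolev constant in $\mathbb R^2$ for $p\in\left(\frac{2r}{r+2},2\right)$, use H\"older once on the gradient ($L^p\to L^2$) and once on the function ($L^{p^\ast}\to L^r$), observe that the $|\Omega|$-exponents sum to $\frac1r$, and take the infimum over admissible $p$. No substantive differences to report.
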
 

Note that inequality \eqref{InPS} was proved in \cite{GPU18} in case of the unit disc.

\begin{proof} 
We estimate the constant $A_{r,2}^2(\Omega)$ using the Talenti estimate \cite{Tal76}
\[
\|f \mid L^q(\mathbb R^n)\|\leq A_{p,q}(\mathbb R^n) \|\nabla f \mid L^p(\mathbb R^n)\|,\,\,q=\frac{np}{n-p},\,\, p<n,
\]
where 
\begin{equation*}\label{EsTal}
A_{p,q}(\mathbb R^n)= \frac{1}{\sqrt{\pi}\cdot \sqrt[p]{n}} \left(\frac{p-1}{n-p}\right)^{\frac{p-1}{p}}
\left(\frac{\Gamma(1+n/2) \Gamma(n)}{\Gamma(n/p) \Gamma(1+n-n/p)}\right)^{\frac{1}{n}}.
\end{equation*}

The Talenti estimate can not be applied directly for $p=n=2$. Choose some number $p: 2r/(2+r)<p<2$. By the H\"older inequality with exponents $(2/(2-p), 2/p)$ we have
\begin{multline*}
\biggr(\iint\limits _{\Omega}|\nabla f(x,y)|^{p}\, dxdy\biggr)^{\frac{1}{p}} \leq
\biggr(\iint\limits _{\Omega} dxdy\biggr)^{\frac{2-p}{2p}} \biggr(\iint\limits _{\Omega}|\nabla f(x,y)|^{2}\, dxdy\biggr)^{\frac{1}{2}}\\ = 
|\Omega|^{\frac{2-p}{2p}}\biggr(\iint\limits _{\Omega}|\nabla f(x,y)|^{2}\, dxdy\biggr)^{\frac{1}{2}}.
\end{multline*} 
Because any function $f\in W^{1,p}_0(\Omega)$ can be extended by zero to $\widetilde{f}\in W^{1,p}_0(\mathbb R^n)$, it permit us to apply the Talenti estimate:
\begin{multline*}
\biggr(\iint\limits _{\Omega}|f(x,y)|^{q}\, dxdy\biggr)^{\frac{1}{q}}=\biggr(\iint\limits _{\mathbb R^2}|\widetilde{f}(x,y)|^{q}\, dxdy\biggr)^{\frac{1}{q}}\\
 \leq A_{p,q}(\mathbb R^2)\biggr(\iint\limits _{\mathbb R^2}|\nabla \widetilde{f}(x,y)|^{p}\, dxdy\biggr)^{\frac{1}{p}}=
A_{p,q}(\mathbb R^2)\biggr(\iint\limits _{\Omega}|\nabla f(x,y)|^{p}\, dxdy\biggr)^{\frac{1}{p}},
\end{multline*}
where
\[
A_{p,q}(\mathbb R^2) =  
\frac{1}{\sqrt{\pi}\cdot \sqrt[p]{2}} \left(\frac{p-1}{2-p}\right)^{\frac{p-1}{p}}
\frac{1}{\sqrt{\Gamma(2/p) \Gamma(3-2/p)}}.
\]

Taking into account the H\"older inequality with exponents $(q/(q-r), q/r)$ we get
\begin{multline*}
\biggr(\iint\limits _{\Omega}|f(x,y)|^{r}\, dxdy\biggr)^{\frac{1}{r}} \leq 
\biggr(\iint\limits _{\Omega} dxdy\biggr)^{\frac{q-r}{qr}} \biggr(\iint\limits _{\Omega}|f(x,y)|^{q}\, dxdy\biggr)^{\frac{1}{q}} \\=
|\Omega|^{\frac{q-r}{qr}}\biggr(\iint\limits _{\Omega}|f(x,y)|^{q}\, dxdy\biggr)^{\frac{1}{q}}
\leq |\Omega|^{\frac{q-r}{qr}}A_{p,q}(\mathbb R^2)
\biggr(\iint\limits _{\Omega}|\nabla f(x,y)|^{p}\, dxdy\biggr)^{\frac{1}{p}}
\\
\leq |\Omega|^{\frac{q-r}{qr}} |\Omega|^{\frac{2-p}{2p}}A_{p,q}(\mathbb R^2)
\biggr(\iint\limits _{\Omega}|\nabla f(x,y)|^{2}\, dxdy\biggr)^{\frac{1}{2}}.
\end{multline*}

Since the last inequality holds for any $p\in (2r/(2+r), 2)$ and $q=2p/(2-p)$ we obtain that
\[
\biggr(\iint\limits _{\Omega}|f(x,y)|^{r}\, dxdy\biggr)^{\frac{1}{r}} \leq A_{r,2}(\Omega)
\biggr(\iint\limits _{\Omega}|\nabla f(x,y)|^{2}\, dxdy\biggr)^{\frac{1}{2}},
\]
where
\[
A_{r,2}(\Omega) \leq \inf\limits_{p\in \left(\frac{2r}{r+2},2\right)} 
\left(\frac{p-1}{2-p}\right)^{\frac{p-1}{p}}
\frac{\left(\sqrt{\pi}\cdot\sqrt[p]{2}\right)^{-1}|\Omega|^{\frac{1}{r}}}{\sqrt{\Gamma(2/p) \Gamma(3-2/p)}}.
\]

\end{proof}

\begin{lemma}\label{lem:TwoWeiPol} 
Let   $w_{1}$, $w_{2}$ be positive a.e. locally integrable functions defined in
$\Omega$ such that
\begin{equation}
d_{s}(w_{1},w_{2}):= \|w_1-w_2\mid L^{s}(\Omega)\|<\infty  \label{EqvWWpol}
\end{equation}
 for some $1<s\le\infty$.

Then  inequality $(\ref{EqvWW})$ holds with the constant
\begin{equation}
B=A_{\frac{2s}{s-1},2}^2(\Omega)\,d_{s}(w_{1},w_{2})\,.\label{Lem2Es}
\end{equation}
\end{lemma}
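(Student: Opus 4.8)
The plan is to bound the left-hand side of \eqref{EqvWW} directly by applying H\"older's inequality to split off the $L^s$-norm of $w_1-w_2$ and then invoke the Poincar\'e--Sobolev inequality from Theorem~\ref{PoinConst} to control the remaining factor by $\|\nabla f\mid L^2(\Omega)\|^2$. Concretely, for $f\in W^{1,2}_0(\Omega)$ write
\[
\iint\limits_\Omega |w_1(z)-w_2(z)|\,|f(z)|^2\,dxdy
\le \|w_1-w_2\mid L^s(\Omega)\|\cdot\left(\iint\limits_\Omega |f(z)|^{2s'}\,dxdy\right)^{\frac{1}{s'}},
\]
where $s'=s/(s-1)$ is the conjugate exponent; the right-hand integral is exactly $\|f\mid L^{r}(\Omega)\|^2$ with $r=2s'=2s/(s-1)$.

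The next step is to check the admissibility of this $r$ in Theorem~\ref{PoinConst}, which requires $r\ge 2$. Since $s>1$ we have $s'>1$, hence $r=2s'>2$, so the hypothesis is met (for $s=\infty$ one gets $r=2$, the limiting case, and the estimate degenerates to the standard Friedrichs inequality, still covered). Applying \eqref{InPS} gives
\[
\left(\iint\limits_\Omega |f(z)|^{r}\,dxdy\right)^{\frac{1}{r}}\le A_{r,2}(\Omega)\,\|\nabla f\mid L^2(\Omega)\|,
\]
so squaring yields $\|f\mid L^r(\Omega)\|^2\le A_{r,2}^2(\Omega)\,\|\nabla f\mid L^2(\Omega)\|^2$. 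Combining with the H\"older bound above produces \eqref{EqvWW} with $B=A_{\frac{2s}{s-1},2}^2(\Omega)\,d_s(w_1,w_2)$, which is precisely \eqref{Lem2Es}.

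There is essentially no hard step here: the argument is a two-line chain, H\"older followed by Poincar\'e--Sobolev. The only point requiring a moment's care is the bookkeeping of exponents — verifying that $r=2s/(s-1)$ is the correct target exponent for the Sobolev embedding and that it lies in the admissible range $r\ge2$ for all $s\in(1,\infty]$ — together with noting that $W^{1,2}_0(\Omega)$ embeds into $L^r(\Omega)$ for such $r$ on a bounded domain so that the right-hand side is finite and the manipulation is legitimate. One should also remark that $\Omega$ is bounded (inherited from the ambient setting of Lemma~\ref{lem:TwoWeight}, where this lemma is used), which is what makes $|\Omega|$ finite and Theorem~\ref{PoinConst} applicable. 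No obstacle is anticipated; the lemma is a clean corollary of Theorem~\ref{PoinConst}.
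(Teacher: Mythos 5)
Your proof is correct and is essentially identical to the paper's argument: H\"older's inequality with exponents $(s,s/(s-1))$ followed by the Poincar\'e--Sobolev inequality \eqref{InPS} with $r=2s/(s-1)$, yielding exactly the constant \eqref{Lem2Es}. The extra remarks on the admissible range of $r$ and boundedness of $\Omega$ are fine but not needed beyond what the paper already assumes.
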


\begin{proof}
By the H\"older inequality and Poincar\'e-Sobolev inequality \eqref{InPS} we get
\begin{multline*}
\iint\limits _{\Omega}|w_{1}(z)-w_{2}(z)||f|^{2}~dxdy
\\
\leq\left(\iint\limits _{\Omega}\left(|w_{1}(z)-w_{2}(z)|\right)^{s}dxdy\right)^{\frac{1}{s}}\left(\iint\limits _{\Omega}|f(z)|^{\frac{2s}{s-1}}dxdy\right)^{\frac{s-1}s}
\\
\leq A_{\frac{2s}{s-1},2}^2(\Omega)\,d_{s}(w_{1},w_{2}) \iint\limits _{\Omega}|\nabla f(z)|^{2}dxdy\,.
\end{multline*}
\end{proof}

By the two previous lemmas we have the following result for variations of the weighted eigenvalues:
\begin{theorem}\label{thm:TwoWW} 
Let  $w_{1}$, $w_{2}$ be positive a.e. locally integrable functions defined in
$\Omega$. Assume that $d_{s}(w_{1},w_{2})<\infty$ for some $s>1$.

Then for any $n\in\mathbb{N}$
\[
|\lambda_{n}[w_{1}]-\lambda_{n}[w_{2}]|\leq \tilde c_{n}A_{\frac{2s}{s-1},2}^2(\Omega) d_{s}(w_{1},w_{2})\,.
\]
\end{theorem}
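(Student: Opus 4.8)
The plan is to chain together the two lemmas just established, so that Theorem \ref{thm:TwoWW} becomes a direct corollary of Lemma \ref{lem:TwoWeight} and Lemma \ref{lem:TwoWeiPol}. First I would apply Lemma \ref{lem:TwoWeiPol} with the given exponent $s>1$. Since by hypothesis $d_s(w_1,w_2)=\|w_1-w_2\mid L^s(\Omega)\|<\infty$, that lemma provides inequality \eqref{EqvWW} with the explicit constant
\[
B=A_{\frac{2s}{s-1},2}^2(\Omega)\,d_s(w_1,w_2).
\]
The only bookkeeping here is the exponent matching: the H\"older pairing $\bigl(s,\tfrac{s}{s-1}\bigr)$ applied to $|w_1-w_2|\cdot|f|^2$ produces $\|w_1-w_2\mid L^s(\Omega)\|\cdot\|f\mid L^{2s/(s-1)}(\Omega)\|^2$, and then the Poincar\'e--Sobolev inequality \eqref{InPS} of Theorem \ref{PoinConst}, which applies because $r=\tfrac{2s}{s-1}\ge 2$ for every $s>1$, converts $\|f\mid L^{2s/(s-1)}(\Omega)\|^2$ into $A_{\frac{2s}{s-1},2}^2(\Omega)\,\|\nabla f\mid L^2(\Omega)\|^2$.

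Next, with this value of $B$ in hand, I would invoke Lemma \ref{lem:TwoWeight}: its hypothesis \eqref{EqvWW} is now verified (here $\Omega$ is a bounded simply connected planar domain with the weighted embeddings $W_0^{1,2}(\Omega)\hookrightarrow L^2(\Omega,w_k)$ compact, so the eigenvalues $\lambda_n[w_k,\Omega]$ are well defined through the Min--Max principle). Its conclusion \eqref{LemEq} then reads
\[
|\lambda_n[w_1]-\lambda_n[w_2]|\leq\frac{B\tilde c_n}{1+B\sqrt{\tilde c_n}}<B\tilde c_n=\tilde c_n\,A_{\frac{2s}{s-1},2}^2(\Omega)\,d_s(w_1,w_2),
\]
which is exactly the asserted estimate.

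I do not anticipate any genuine obstacle: all the analytic content --- the Min--Max characterization of the eigenvalues, the monotonicity of $t\mapsto t/(1+Bt)$ that drives Lemma \ref{lem:TwoWeight}, and the Talenti-type Poincar\'e constant from Theorem \ref{PoinConst} --- has already been isolated in the preceding results, so this final step is pure assembly. The one remark worth making is that one may retain the sharper bound $B\tilde c_n/(1+B\sqrt{\tilde c_n})$ but, for the sake of a clean statement, report $\tilde c_n B = \tilde c_n A_{\frac{2s}{s-1},2}^2(\Omega)\,d_s(w_1,w_2)$.
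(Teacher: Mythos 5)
Your proposal is correct and matches the paper's argument exactly: the paper itself states the theorem as an immediate consequence of Lemma \ref{lem:TwoWeight} and Lemma \ref{lem:TwoWeiPol}, taking $B=A_{\frac{2s}{s-1},2}^2(\Omega)\,d_s(w_1,w_2)$ and using the (coarser) bound $B\tilde c_n$ from \eqref{LemEq}. Your exponent bookkeeping ($r=\tfrac{2s}{s-1}\ge 2$ and the H\"older pairing $(s,\tfrac{s}{s-1})$) is exactly what underlies Lemma \ref{lem:TwoWeiPol}, so nothing is missing.
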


Now we estimate the quantity $d_{s}(w_{1},w_{2})$ using the $L^2$-norms of weights.
\begin{lemma}\label{L4.5}
Let $w_1$, $w_2$ be a.e. positive in $\Omega$ weights such that $w_k\in L^{\frac{s}{2-s}}$, $k=1,2$, for some $s \in (1, 2]$.
Then 
\[
d_{s}(w_{1},w_{2}) \leq \left(\|w_1\,|\,L^{\frac{2}{2-s}}(\Omega)\|^{\frac{1}{2}} +
\|w_2\,|\,L^{\frac{s}{2-s}}(\Omega)\|^{\frac{1}{2}} \right) \cdot
\|w_1^{\frac{1}{2}}-w_2^{\frac{1}{2}}\,|\,L^{2}(\Omega)\|<\infty.
\]
\end{lemma}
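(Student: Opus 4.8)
The plan is to factor the difference $w_1 - w_2$ as a product of a sum of square roots and a difference of square roots, and then apply H\"older's inequality with a carefully chosen pair of exponents. First I would write the pointwise identity
\[
w_1(z) - w_2(z) = \left(w_1^{\frac{1}{2}}(z) + w_2^{\frac{1}{2}}(z)\right)\left(w_1^{\frac{1}{2}}(z) - w_2^{\frac{1}{2}}(z)\right),
\]
which is valid a.e.\ since both weights are nonnegative. Taking $L^s$-norms and applying the triangle inequality together with H\"older's inequality, I would split the $L^s$-norm of the product into the $L^{2s/(2-s)}$-norm of the first factor times the $L^2$-norm of the second factor; the exponents $\tfrac{2s}{2-s}$ and $2$ are H\"older-conjugate in the sense that $\tfrac{2-s}{2s} + \tfrac{1}{2} = \tfrac{1}{s}$, which is exactly what makes the bookkeeping work for $s \in (1,2]$.

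Next I would control the $L^{2s/(2-s)}$-norm of the sum $w_1^{1/2} + w_2^{1/2}$. By the triangle inequality in $L^{2s/(2-s)}(\Omega)$,
\[
\left\|w_1^{\frac{1}{2}} + w_2^{\frac{1}{2}} \,\middle|\, L^{\frac{2s}{2-s}}(\Omega)\right\|
\leq \left\|w_1^{\frac{1}{2}} \,\middle|\, L^{\frac{2s}{2-s}}(\Omega)\right\| + \left\|w_2^{\frac{1}{2}} \,\middle|\, L^{\frac{2s}{2-s}}(\Omega)\right\|,
\]
and then I would use the elementary identity $\|g^{1/2} \mid L^{p}(\Omega)\| = \|g \mid L^{p/2}(\Omega)\|^{1/2}$ with $p = \tfrac{2s}{2-s}$, so that $p/2 = \tfrac{s}{2-s}$. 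This turns the two terms into $\|w_1 \mid L^{s/(2-s)}(\Omega)\|^{1/2}$ and $\|w_2 \mid L^{s/(2-s)}(\Omega)\|^{1/2}$, which are finite precisely by the hypothesis $w_k \in L^{s/(2-s)}(\Omega)$. Assembling the pieces yields
\[
d_s(w_1,w_2) \leq \left(\|w_1 \mid L^{\frac{s}{2-s}}(\Omega)\|^{\frac{1}{2}} + \|w_2 \mid L^{\frac{s}{2-s}}(\Omega)\|^{\frac{1}{2}}\right)\cdot \|w_1^{\frac{1}{2}} - w_2^{\frac{1}{2}} \mid L^2(\Omega)\|,
\]
and the right-hand side is finite since the first factor is finite by hypothesis and the second is finite because it is dominated by $\|w_1 \mid L^{s/(2-s)}(\Omega)\|^{1/2} + \|w_2 \mid L^{s/(2-s)}(\Omega)\|^{1/2}$ (again via $\|g^{1/2}\|_{L^2}^2 = \|g\|_{L^1} \le |\Omega|^{1-(2-s)/s}\|g\|_{L^{s/(2-s)}}$ when $s/(2-s) \ge 1$, i.e.\ $s \ge 1$).

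There is essentially no serious obstacle here; the proof is a short manipulation with H\"older's inequality and the algebraic factorization of $w_1 - w_2$. The only point requiring a moment's care is checking that the exponent $\tfrac{s}{2-s}$ lies in the correct range so that all the norms invoked are genuine norms (i.e.\ $\tfrac{s}{2-s} \ge 1$, equivalently $s \ge 1$, which holds) and that the H\"older pairing $\left(\tfrac{2s}{2-s}, 2\right)$ is admissible (which forces the constraint $s \le 2$ so that $\tfrac{2s}{2-s} > 0$ and is $\ge 1$). I would state these elementary range checks explicitly and then close the argument. Note that I have silently written $L^{2/(2-s)}$ in one place above matching the lemma statement's typographical form, but the consistent exponent throughout is $\tfrac{s}{2-s}$; I would make sure the final displayed inequality matches the statement of Lemma~\ref{L4.5} verbatim.
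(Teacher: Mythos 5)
Your argument is correct and is essentially the paper's own proof: the same factorization $w_1-w_2=\bigl(\sqrt{w_1}+\sqrt{w_2}\bigr)\bigl(\sqrt{w_1}-\sqrt{w_2}\bigr)$, H\"older with the pairing $\bigl(\tfrac{2s}{2-s},2\bigr)$ (equivalently, H\"older with exponents $2/s$ and $2/(2-s)$ applied to the integral of the $s$-th power), followed by the triangle inequality and $\|g^{1/2}\,|\,L^{p}(\Omega)\|=\|g\,|\,L^{p/2}(\Omega)\|^{1/2}$. You also rightly flagged that the exponent $\tfrac{2}{2-s}$ appearing in the statement is a typographical slip for $\tfrac{s}{2-s}$, which is what both your argument and the paper's proof actually produce.
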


\begin{proof}
By the definitions of $w_1$, $w_2$ and $d_{s}(w_{1},w_{2})$
\begin{multline*}
\left[d_{s}(w_{1},w_{2})\right]^s= 
\iint\limits _{\Omega}\left(|w_{1}(z)-w_{2}(z)|\right)^{s}dxdy \\
=\iint\limits _{\Omega}\left|\sqrt{w_1(z)}+\sqrt{w_2(z)}\right|^{s} \left|\sqrt{w_1(z)}-\sqrt{w_2(z)}\right|^{s}dxdy.
\end{multline*}
Applying to the last integral the H\"older inequality with $q=\frac{2}{s}$
($1\leq q<2$ because $1<s\leq 2$) and $q'=\frac{2}{2-s}$ we get
\begin{multline*}
\left[d_{s}(w_{1},w_{2})\right]^s \leq
\left(\iint\limits _{\Omega}\left|\sqrt{w_1(z)}+\sqrt{w_2(z)}\right|^{\frac{2s}{2-s}}dxdy\right)^{\frac{2-s}{2}} \\
\times \left(
\iint\limits _{\Omega}\left|\sqrt{w_1(z)}-\sqrt{w_2(z)}\right|^{2}dxdy\right)^{\frac{s}{2}}.
\end{multline*}
Now using the triangle inequality we obtain 
\begin{multline*}
d_{s}(w_{1},w_{2}) \leq \|w_1^{\frac{1}{2}}+w_2^{\frac{1}{2}}\,|\,L^{\frac{2s}{2-s}}(\Omega)\| \cdot \|w_1^{\frac{1}{2}}-w_2^{\frac{1}{2}}\,|\,L^{2}(\Omega)\| \\
\leq \left(\|w_1\,|\,L^{\frac{s}{2-s}}(\Omega)\|^{\frac{1}{2}} +
\|w_2\,|\,L^{\frac{s}{2-s}}(\Omega)\|^{\frac{1}{2}} \right) \cdot
\|w_1^{\frac{1}{2}}-w_2^{\frac{1}{2}}\,|\,L^{2}(\Omega)\|.
\end{multline*}
\end{proof}

\section{Eigenvalue problems with quasihyperbolic weights}

In this section we consider weak weighted eigenvalue problem 
\begin{equation}\label{WFWEP}
\iint\limits_\Omega \left\langle \nabla f(z), \nabla \overline{g(z)} \right\rangle dxdy
= \lambda \iint\limits_\Omega h(z)f(z)\overline{g(z)}~dxdy, \,\,\, \forall g\in W_0^{1,2}(\Omega). 
\end{equation}
with the quasihyperbolic (quasiconformal) weight $h=h(z):=|J(z,\varphi^{-1})|$  generated by the $A^{-1}$-quasiconformal mapping $\varphi^{-1}:\Omega\to\widetilde{\Omega}$.

Using Theorem~\ref{L4.1} we prove the weighted Poincar\'e-Sobolev inequality in the bounded simply connected domain $\Omega\subset\mathbb C$ for so-called quasihyperbolic weights which are Jacobians of mappings inverse to $A$-quasiconformal homeomorphisms and obtain solvability of the weighted eigenvalue problem (\ref{WFWEP}).
Denote by  $V^{*}$ the exact constant in the Poincar\'e-Sobolev inequality
\begin{equation*}
\|g\,|\,L^{2}(\widetilde{\Omega})\|\leq V^{*}\|g\,|\,L^{1,2}(\widetilde{\Omega}, A)\|,\,\,\, \forall g\in W_0^{1,2}(\widetilde{\Omega}, A).
\end{equation*}

\begin{theorem}\label{WPIn}
Let $A$ be a matrix satisfies the uniform ellipticity condition~\eqref{UEC} and $\Omega, \widetilde{\Omega} \subset \mathbb C$ be simply connected domains.
Then the weighted embedding operator
\begin{equation}\label{EmbedOper}
i_{\mathbb D}:W_0^{1,2}(\Omega) \hookrightarrow L^2{(\Omega,h)}
\end{equation}
is compact and for any function $f\in W_0^{1,2}(\Omega)$ the inequality 
\[
\|f\,|\,L^{2}(\Omega,h)\|\leq V^{*}\|f\,|\,L^{1,2}(\Omega)\|
\]
holds.
\end{theorem}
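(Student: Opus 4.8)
The plan is to transplant the Poincar\'e--Sobolev inequality from $\widetilde{\Omega}$ to $\Omega$ through the $A$-quasiconformal mapping $\varphi:\widetilde{\Omega}\to\Omega$ (whose inverse $\varphi^{-1}:\Omega\to\widetilde{\Omega}$ generates $h$), using Theorem~\ref{L4.1} on the Dirichlet seminorm side and the change-of-variables formula on the $L^{2}$ side.

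First I would fix $f\in C_{0}^{\infty}(\Omega)$ and set $g=\varphi^{*}(f)=f\circ\varphi$. Since planar quasiconformal homeomorphisms lie in $W^{1,2}_{\loc}$, are differentiable a.e., satisfy the Luzin $N$ and $N^{-1}$ conditions (so the area/change-of-variables formula applies to nonnegative measurable integrands), and obey the a.e.\ multiplicativity of Jacobians $J(\varphi(w),\varphi^{-1})\,J(w,\varphi)=1$, the substitution $z=\varphi(w)$ gives
\[
\|f\,|\,L^{2}(\Omega,h)\|^{2}=\iint\limits_{\Omega}|f(z)|^{2}|J(z,\varphi^{-1})|\,dxdy=\iint\limits_{\widetilde{\Omega}}|f(\varphi(w))|^{2}\,dudv=\|g\,|\,L^{2}(\widetilde{\Omega})\|^{2}.
\]
In particular $g\in L^{2}(\widetilde{\Omega})$; moreover $g$ is weakly differentiable with $\|g\,|\,L^{1,2}(\widetilde{\Omega},A)\|=\|f\,|\,L^{1,2}(\Omega)\|<\infty$ by Theorem~\ref{L4.1}, and $\operatorname{supp}g=\varphi^{-1}(\operatorname{supp}f)$ is compact, so a mollification argument puts $g\in W^{1,2}_{0}(\widetilde{\Omega},A)$. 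Applying the Poincar\'e--Sobolev inequality defining $V^{*}$ and then Theorem~\ref{L4.1} once more,
\[
\|f\,|\,L^{2}(\Omega,h)\|=\|g\,|\,L^{2}(\widetilde{\Omega})\|\le V^{*}\|g\,|\,L^{1,2}(\widetilde{\Omega},A)\|=V^{*}\|f\,|\,L^{1,2}(\Omega)\|.
\]
The bound for an arbitrary $f\in W^{1,2}_{0}(\Omega)$ then follows by approximation: choosing $f_{k}\in C_{0}^{\infty}(\Omega)$ with $f_{k}\to f$ in $W^{1,2}(\Omega)$, the estimate applied to $f_{k}-f_{j}$ shows $(f_{k})$ is Cauchy in $L^{2}(\Omega,h)$, so $f\in L^{2}(\Omega,h)$ and the inequality passes to the limit; this simultaneously shows $\varphi^{*}$ is bounded as an operator $W^{1,2}_{0}(\Omega)\to W^{1,2}_{0}(\widetilde{\Omega},A)$.

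For compactness I would factor the embedding \eqref{EmbedOper} as
\[
i_{\mathbb D}=(\varphi^{-1})^{*}\circ i_{\widetilde{\Omega}}\circ\varphi^{*},
\]
where $\varphi^{*}:W^{1,2}_{0}(\Omega)\to W^{1,2}_{0}(\widetilde{\Omega},A)$ is bounded by the previous step, $i_{\widetilde{\Omega}}:W^{1,2}_{0}(\widetilde{\Omega},A)\hookrightarrow L^{2}(\widetilde{\Omega})$ is the natural embedding, and $(\varphi^{-1})^{*}:L^{2}(\widetilde{\Omega})\to L^{2}(\Omega,h)$, $g\mapsto g\circ\varphi^{-1}$, is the surjective isometry furnished by the identity $\|f\,|\,L^{2}(\Omega,h)\|=\|f\circ\varphi\,|\,L^{2}(\widetilde{\Omega})\|$ above. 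By the uniform ellipticity \eqref{UEC} the seminorm $\|\cdot\,|\,L^{1,2}(\widetilde{\Omega},A)\|$ is equivalent to $\|\nabla\cdot\,|\,L^{2}(\widetilde{\Omega})\|$, so $W^{1,2}_{0}(\widetilde{\Omega},A)=W^{1,2}_{0}(\widetilde{\Omega})$ with equivalent norms, and for the bounded domain $\widetilde{\Omega}$ the Rellich--Kondrachov theorem makes $i_{\widetilde{\Omega}}$ compact; hence $i_{\mathbb D}$ is compact.

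The main obstacle is the rigorous change-of-variables/membership step: one must invoke the fine properties of planar quasiconformal maps (absolute continuity on lines, a.e.\ differentiability, the $N$ and $N^{-1}$ properties and the resulting chain rule for Jacobians a.e.) to justify both the identity $\|f\,|\,L^{2}(\Omega,h)\|=\|f\circ\varphi\,|\,L^{2}(\widetilde{\Omega})\|$ and the fact that $\varphi^{*}$ carries $C_{0}^{\infty}(\Omega)$, hence $W^{1,2}_{0}(\Omega)$, into $W^{1,2}_{0}(\widetilde{\Omega},A)$; everything else is bookkeeping with Theorem~\ref{L4.1} and the Poincar\'e--Sobolev inequality for $\widetilde{\Omega}$. (If $\widetilde{\Omega}$ is not assumed bounded a priori, the compactness step instead uses that such domains have finite geodesic diameter and are therefore bounded.)
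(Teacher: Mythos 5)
Your proposal is correct and follows essentially the same route as the paper: transfer the Poincar\'e--Sobolev inequality from $\widetilde{\Omega}$ to $\Omega$ via the $A$-quasiconformal map using the isometry of Theorem~\ref{L4.1} together with the change-of-variables identity $\|f\,|\,L^{2}(\Omega,h)\|=\|f\circ\varphi\,|\,L^{2}(\widetilde{\Omega})\|$, first for $C_0^{\infty}$ functions and then by approximation, and obtain compactness by factoring the embedding as $(\varphi^{-1})^{*}\circ i_{\widetilde{\Omega}}\circ\varphi^{*}$ with $\varphi^{*}$ bounded, $i_{\widetilde{\Omega}}$ compact, and $(\varphi^{-1})^{*}$ an isometry. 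The only differences are expository (your explicit appeal to Rellich--Kondrachov and norm equivalence from the uniform ellipticity, where the paper simply cites the compact embedding), so no gap remains.
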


\begin{proof} Define the complex dilatation $\mu(z)$ agreed with the matrix $A$ by 
\begin{equation*}
\mu(z)=\frac{a_{22}(z)-a_{11}(z)-2ia_{12}(z)}{\det(I+A(z))}.
\end{equation*} 
Because the matrix $A$ satisfies the uniform ellipticity condition (\ref{UEC}) then 
\begin{equation*}
|\mu(w)|\leq \frac{K-1}{K+1}<1,\,\,\, \text{a.e. in}\,\,\, \widetilde{\Omega},
\end{equation*}
and by \cite{Ahl66} there exists a $\mu$-quasiconformal mapping $\varphi : \widetilde{\Omega} \to \Omega$ agreed with the matrix $A$
i.e. an $A$-quasiconformal mapping. 

Hence by Theorem~\ref{L4.1} the composition operator
\[
\varphi^{*}:L^{1,2}(\Omega)\to L^{1,2}(\widetilde{\Omega}, A),\,\,\, \varphi^{*}(f)= f\circ \varphi
\]
is an isometry. 

Let $f\in L^{1,2}(\Omega) \cap C_0^{\infty}(\Omega)$,
then the composition $g=f\circ \varphi$ belongs to $L^{1,2}(\widetilde{\Omega}, A)$ and because the matrix $A$ satisfies the uniform ellipticity condition (\ref{UEC}) by the Sobolev embedding theorem we can conclude that $g=f\circ \varphi\in W_0^{1,2}(\widetilde{\Omega}, A)$ \cite{M} and the Poincar\'e-Sobolev inequality
\begin{equation}\label{PSIn10}
\|g\,|\,L^{2}(\widetilde{\Omega})\|\leq V^{*}\|g\,|\,L^{1,2}(\widetilde{\Omega}, A)\|
\end{equation}
holds with the exact constant $V^{*}=\lambda_1[A,\widetilde{\Omega}]^{-\frac{1}{2}}$.

Now using the ``transfer" diagram \cite{GG94,GU09} and the change of variable formula for quasiconformal mappings \cite{VGR} we obtain
\begin{multline*}
\|f\,|\,L^{2}(\Omega,h)\|
=\left(\iint \limits_{\Omega} |f(z)|^2h(z)dxdy \right)^{\frac{1}{2}}
=\left(\iint \limits_{\Omega} |f(z)|^2 |J(z,\varphi^{-1})|dxdy \right)^{\frac{1}{2}} \\
=\left(\iint \limits_{\widetilde{\Omega}} |f\circ \varphi(w)|^2dudv \right)^{\frac{1}{2}}
\leq V^{*}\left(\iint \limits_{\widetilde{\Omega}} \left\langle A(w) \nabla (f \circ \varphi(w)), \nabla (f \circ \varphi(w)) \right\rangle dudv \right)^{\frac{1}{2}} \\
=V^{*} \left(\iint \limits_{\Omega} |\nabla f(z)|^2dxdy \right)^{\frac{1}{2}}
=V^{*}\|f\,|\,L^{1,2}(\Omega)\|.
\end{multline*} 
Approximating an arbitrary function  $f \in W_0^{1,2}(\Omega)$ by functions from $C_0^{\infty}(\Omega)$ we have that the weighted Poincar\'e-Sobolev inequality
\[
\|f\,|\,L^{2}(\Omega,h)\| \leq V^{*}\|f\,|\,L^{1,2}(\Omega)\|
\] 
holds for any function $f \in W_0^{1,2}(\Omega)$. 

Further we prove that the embedding operator 
\begin{equation}
i_{\Omega}:W_0^{1,2}(\Omega) \hookrightarrow L^2{(\Omega,h)}
\end{equation}
is compact.  By the same "transfer" diagram \cite{GG94,GU09} this operator can be represented as a composition of three operators: the composition operator $\varphi^{*}_w :W_0^{1,2}(\Omega)\to W_0^{1,2}(\widetilde{\Omega},A)$, the compact embedding operator 
\[i_{\widetilde{\Omega}}:W_0^{1,2}(\widetilde{\Omega},A) \hookrightarrow L^2(\widetilde{\Omega})
\] 
and the composition operator for Lebesgue spaces $(\varphi^{-1})^{*}_l:L^2(\widetilde{\Omega}) \to L^2(\Omega, h)$. 


Firstly we prove that the operator $(\varphi^{-1})^{*}_l$ is an isometry. By the change of variables formula we obtain:
\begin{multline*}
\|f\,|\,L^{2}(\Omega,h)\|
=\left(\iint \limits_{\Omega} |f(z)|^2h(z)~dxdy \right)^{\frac{1}{2}} \\
=\left(\iint \limits_{\Omega} |f(z)|^2 |J(z,\varphi^{-1})|~dxdy \right)^{\frac{1}{2}} 
=\left(\iint \limits_{\widetilde{\Omega}} |f\circ \varphi(w)|^2~dudv \right)^{\frac{1}{2}}
=\|g\,|\,L^{2}(\widetilde{\Omega})\|.
\end{multline*}

Secondly we prove that the composition operator
\[
\varphi_w^{*}:W_0^{1,2}(\Omega)\to W_0^{1,2}(\widetilde{\Omega}, A)
\]
is bounded.

Let $f\in L^{1,2}(\Omega) \cap C_0^{\infty}(\Omega)$. 
Then composition $\varphi^{*}_w(f)= f\circ \varphi \in W_0^{1,2}(\widetilde{\Omega}, A)$.
Thus, given the Poincar\'e-Sobolev inequality \eqref{PSIn10} and the isometry of the
composition operator
\[
\varphi^{*}_w:L^{1,2}(\Omega)\to L^{1,2}(\widetilde{\Omega}, A),
\]
we have
\begin{multline*}
\|\varphi^{*}_w(f)\,|\,L^{2}(\widetilde{\Omega})\|
\leq V^{*}\|\varphi^{*}_w(f)\,|\,L^{1,2}(\widetilde{\Omega}, A)\| \\
= V^{*}\|f\,|\,L^{1,2}(\Omega)\|
\leq V^{*}\|f\,|\,W_0^{1,2}(\Omega)\|.
\end{multline*}
Here $V^{*}$ is the exact constant in the corresponding the Poincar\'e-Sobolev inequality \eqref{PSIn10}. Therefore
\begin{multline*}
\|\varphi^{*}_w(f)\,|\,W_0^{1,2}(\widetilde{\Omega}, A)\|
=\|\varphi^{*}_w(f)\,|\,L^{2}(\widetilde{\Omega})\|+ \|\varphi^{*}_w(f)\,|\,L^{1,2}(\widetilde{\Omega}, A)\| \\
\leq V^{*}\|f\,|\,L^{1,2}(\Omega)\| + \|f\,|\,L^{1,2}(\Omega)\| 
\leq (V^{*}+1)\|f\,|\,W_0^{1,2}(\Omega)\|.
\end{multline*}
Approximating an arbitrary function  $f \in W_0^{1,2}(\Omega)$ by functions in the space $C_0^{\infty}(\Omega)$ and taking into account that quasiconformal mappings possess the Luzin $N^{-1}$-property (preimage of a set measure zero has measure zero) we obtain that the inequality
\[
\|\varphi^{*}_w(f)\,|\,W_0^{1,2}(\widetilde{\Omega}, A)\|
\leq (V^{*}+1)\|f\,|\,W_0^{1,2}(\Omega)\|
\] 
holds for any function $f \in W_0^{1,2}(\Omega)$. 

Hence the embedding operator $i_{\Omega}$ is compact as a composition of the compact operator $i_{\widetilde{\Omega}}$ and bounded operators $\varphi^{*}_w$ and $(\varphi^{-1})^{*}_l$.

\end{proof}

According to Theorem~\ref{WPIn} the weighted embedding operator is compact. By standard arguments we conclude that the spectrum of the weighted eigenvalue problem~\eqref{WFWEP} with quasihyperbolic (quasiconformal)
weights $h$ is discrete and can be written in the form of a non-decreasing sequence
\[
0<\lambda_1[h,\Omega] \leq \lambda_2[h,\Omega] \leq \ldots \leq \lambda_n[h,\Omega] \leq \ldots,
\] 
where each eigenvalue is repeated as many time as its multiplicity (see, for example, \cite{L98}).

By Theorem~\ref{L4.1} the eigenvalue problem (\ref{WFEP})
\begin{equation*}
\iint\limits_{\widetilde{\Omega}} \left\langle \nabla g(w), \nabla \overline{f(w)} \right\rangle dudv
= \lambda \iint\limits_{\widetilde{\Omega}} g(w)\overline{f(w)}~dudv, \,\,\, \forall f\in W_0^{1,2}(\widetilde{\Omega}). 
\end{equation*} is equivalent to the weighted eigenvalue problem (\ref{WFWEP})
\begin{equation*}
\iint\limits_\Omega \left\langle \nabla f(z), \nabla \overline{g(z)} \right\rangle dxdy
= \lambda \iint\limits_\Omega h(z)f(z)\overline{g(z)}~dxdy, \,\,\, \forall g\in W_0^{1,2}(\Omega). 
\end{equation*}
in the domain $\Omega$  
and 
\begin{equation}\label{EqEigenvalues}
\lambda_n[h,\Omega]=\lambda_n[A, \widetilde{\Omega}],\,\,\, n\in \mathbb N.
\end{equation}
For weighted eigenvalues \cite{L98, Henr} we have the following properties:

($i$)  $\lim \limits_{n \to \infty} \lambda_n[h,\Omega]=\infty$.

($ii$) for each $n\in \mathbb N$
\begin{multline}\label{MinMax}
\lambda_n[A, \widetilde{\Omega}]= \inf \limits_{\substack{L \subset W_0^{1,2}(\widetilde{\Omega},A)\\ \dim L=n}}
\sup \limits_{\substack{g\in L \\ g \neq 0}} 
\frac{\iint\limits_{\widetilde{\Omega}} \left\langle A(w) \nabla g, \nabla g\right\rangle dudv}{\iint\limits_{\widetilde{\Omega}} |g|^2dudv} \\
= \inf \limits_{\substack{L \subset W_0^{1,2}(\Omega,h,1)\\ \dim L=n}}
\sup \limits_{\substack{f\in L \\ f \neq 0}} 
\frac{\iint\limits_{\Omega} |\nabla f|^2dxdy}{\iint\limits_{\Omega} |f|^2h(z)dxdy}=\lambda_n[h,\Omega]
\end{multline}
(Min-Max Principle), and
\begin{equation}\label{MaxPr}
\lambda_n[h,\Omega]= \sup \limits_{\substack{f\in M_n \\ f \neq 0}} 
\frac{\iint\limits_{\Omega} |\nabla f|^2dxdy}{\iint\limits_{\Omega} |f|^2h(z)dxdy}
\end{equation} 
where
\[
M_n= \spn \{\varphi_1[h,\Omega], \ldots , \varphi_n[h,\Omega]\}
\]
and $\{\varphi_k[h]\}_{k=1}^{\infty}$ is an orthonormal set of eigenfunctions corresponding to the eigenvalues $\{\lambda_n[h,\Omega]\}_{k=1}^{\infty}$.

($iii$) If $n=1$, then formula~\eqref{MinMax} reduces to 
\begin{multline*}
\lambda_1[A, \widetilde{\Omega}]= \inf \limits_{\substack{g\in W_0^{1,2}(\widetilde{\Omega},A)\\ g \neq 0}} 
\frac{\iint\limits_{\widetilde{\Omega}} \left\langle A(w) \nabla g, \nabla g\right\rangle dudv}{\iint\limits_{\widetilde{\Omega}} |g|^2dudv} \\
= \inf \limits_{\substack{f\in W_0^{1,2}(\Omega,h,1)\\ f \neq 0}}
\frac{\iint\limits_{\Omega} |\nabla f|^2dxdy}{\iint\limits_{\Omega} |f|^2h(z)dxdy}=\lambda_1[h,\Omega].
\end{multline*}
In other words
\begin{equation}\label{FirstEig}
\lambda_1[A,\widetilde{\Omega}]=\lambda_1[h,\Omega]=\frac{1}{(V^{*})^2}
\end{equation}
where $V^{*}$ is the sharp constant in the inequality
\[
\left(\iint\limits_{\Omega}|f|^2h(z)dxdy\right)^{\frac{1}{2}} \leq
V^{*} \left(\iint\limits_{\Omega}|\nabla f|^2dxdy\right)^{\frac{1}{2}}, \,\,\, \forall f\in W_0^{1,2}(\Omega).
\]

\section{On ``\,norm\," $d_s(h)$ for quasiconformal weights $h(z)$}

Let $\widetilde{\Omega}$ be a bounded simply connected domain in $\mathbb C$ and $A\in M^{2 \times 2}(\widetilde{\Omega})$. Assume that there exists an $A$-quasiconformal mapping $\varphi:\widetilde{\Omega} \to \Omega$.
Note that for quasiconformal mappings $\varphi^{-1}:\Omega\to\widetilde{\Omega}$
$$
J_{\varphi^{-1}}(z):=\lim\limits_{r\to 0}\frac{|\varphi^{-1}(B(z,r))|}{|B(z,r)|}=|J(z, \varphi^{-1})|
$$
for almost all $z\in\Omega$. 

In the case $w_1=1$ and $w_2=J_{\varphi^{-1}}$ we define
\begin{equation}
d_s(h):=d_{s}(1,J_{\varphi^{-1}})= \|1-J_{\varphi^{-1}}\mid L^{s}(\Omega)\|<\infty  
\end{equation}

As a consequence of Lemma~\ref{L4.5} in the case of quasihyperbolic weights we have
\begin{corollary}\label{Cor-5.1}
Let $\varphi:\widetilde{\Omega} \to \Omega$ be an $A$-quasiconformal homeomorphism and $h(z)$ be the corresponding quasihyperbolic weight. Assume that $J_{\varphi^{-1}}\in L^{\beta}(\Omega)$ for some $\beta >1$.

Then for $s=\frac{2\beta}{\beta +1}$
\[
d_{s}(h) \leq \left(|\Omega|^{\frac{1}{2\beta}} +
\|J_{\varphi^{-1}}\,|\,L^{\beta}(\Omega)\|^{\frac{1}{2}} \right) \cdot
\|1-J_{\varphi^{-1}}^{\frac{1}{2}}\,|\,L^{2}(\Omega)\|.
\]
\end{corollary}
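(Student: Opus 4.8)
The plan is to derive Corollary~\ref{Cor-5.1} as a direct specialization of Lemma~\ref{L4.5} by setting $w_1 = 1$ and $w_2 = J_{\varphi^{-1}}$, so the main work is to check that the hypotheses of the lemma are met with the chosen exponent $s = \frac{2\beta}{\beta+1}$ and then to simplify the resulting norms. First I would verify that $s \in (1,2]$: since $\beta > 1$, one has $\frac{2\beta}{\beta+1} > 1$, and since $\beta < \infty$, one has $\frac{2\beta}{\beta+1} < 2$; thus $s$ is admissible for Lemma~\ref{L4.5}. Next I would compute the auxiliary exponent appearing in that lemma, namely $\frac{s}{2-s}$; a short calculation gives $2 - s = 2 - \frac{2\beta}{\beta+1} = \frac{2}{\beta+1}$, so $\frac{s}{2-s} = \frac{2\beta/(\beta+1)}{2/(\beta+1)} = \beta$. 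Hence the Lebesgue exponent $\frac{s}{2-s}$ in Lemma~\ref{L4.5} is exactly $\beta$, which is why the hypothesis $J_{\varphi^{-1}} \in L^{\beta}(\Omega)$ is precisely the condition $w_2 \in L^{s/(2-s)}(\Omega)$ required there.

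Having matched the exponents, I would substitute into the conclusion of Lemma~\ref{L4.5}:
\[
d_{s}(w_{1},w_{2}) \leq \left(\|w_1\,|\,L^{\frac{s}{2-s}}(\Omega)\|^{\frac{1}{2}} +
\|w_2\,|\,L^{\frac{s}{2-s}}(\Omega)\|^{\frac{1}{2}} \right) \cdot
\|w_1^{\frac{1}{2}}-w_2^{\frac{1}{2}}\,|\,L^{2}(\Omega)\|.
\]
With $w_1 \equiv 1$ one has $\|w_1 \mid L^{\beta}(\Omega)\|^{1/2} = |\Omega|^{1/(2\beta)}$, with $w_2 = J_{\varphi^{-1}}$ one has $\|w_2 \mid L^{\beta}(\Omega)\|^{1/2} = \|J_{\varphi^{-1}}\mid L^{\beta}(\Omega)\|^{1/2}$, and $w_1^{1/2} - w_2^{1/2} = 1 - J_{\varphi^{-1}}^{1/2}$. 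This yields exactly the claimed estimate. For finiteness of the right-hand side I would note that $J_{\varphi^{-1}}\in L^\beta(\Omega)$ with $\beta>1$ and $\Omega$ bounded gives $J_{\varphi^{-1}}\in L^1(\Omega)$, hence $J_{\varphi^{-1}}^{1/2}\in L^2(\Omega)$ and $1\in L^2(\Omega)$, so $\|1-J_{\varphi^{-1}}^{1/2}\mid L^2(\Omega)\|<\infty$; combined with the boundedness of the first factor this shows $d_s(h)<\infty$.

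There is essentially no obstacle here: the statement is a bookkeeping corollary whose only content is the arithmetic identity $\frac{s}{2-s}=\beta$ for $s=\frac{2\beta}{\beta+1}$ and the observation $w_1\equiv 1 \Rightarrow \|w_1\mid L^\beta(\Omega)\| = |\Omega|^{1/\beta}$. The one point that deserves a word of care is that Lemma~\ref{L4.5} as stated restricts $s$ to $(1,2]$ and assumes $w_k \in L^{s/(2-s)}$; I would make explicit that $\beta>1$ places $s$ strictly inside $(1,2)$ so the lemma applies, and that the single hypothesis $J_{\varphi^{-1}}\in L^{\beta}(\Omega)$ supplies the integrability of both weights (the weight $w_1\equiv 1$ being trivially in every $L^p(\Omega)$ since $\Omega$ is bounded). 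With these remarks the proof is immediate.
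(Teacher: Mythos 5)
Your proof is correct and follows exactly the route the paper intends: the corollary is stated there as an immediate consequence of Lemma~\ref{L4.5} with $w_1=1$, $w_2=J_{\varphi^{-1}}$, and the key arithmetic fact $\frac{s}{2-s}=\beta$ for $s=\frac{2\beta}{\beta+1}$, which is precisely your computation. Your added remarks on $s\in(1,2)$ and on finiteness of the right-hand side are harmless bookkeeping the paper leaves implicit.
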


Now we prove the main result of this paper:

\begin{theorem}\label{Main}
Let $A$ be a matrix satisfies the uniform ellipticity condition \eqref{UEC} and a domain $\widetilde{\Omega}$ be $A$-quasiconformal $\beta$-regular about $\Omega$. 
Then for any $n\in \mathbb N$
\begin{multline*}
|\lambda_n[A, \widetilde{\Omega}]-\lambda_n[A, {\Omega}]| 
\leq c_n A^2_{\frac{4\beta}{\beta -1},2}(\Omega) \\
\times \left(|\Omega|^{\frac{1}{2\beta}} +
\|J_{\varphi^{-1}}\,|\,L^{\beta}(\Omega)\|^{\frac{1}{2}} \right) \cdot
\|1-J_{\varphi^{-1}}^{\frac{1}{2}}\,|\,L^{2}(\Omega)\|,
\end{multline*}
where
$c_n=\max\left\{\lambda_n^2[A, \widetilde{\Omega}], \lambda_n^2[A, \Omega]\right\}$ and
$J_{\varphi^{-1}}$ is a Jacobian of inverse mapping to $A$-quasiconformal homeomorphism $\varphi:\widetilde{\Omega} \to \Omega$. 
\end{theorem}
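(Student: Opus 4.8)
The plan is to reduce the comparison of the two $A$-divergence form spectra to a one-domain, two-weight eigenvalue problem on $\Omega$, and then to apply Theorem~\ref{thm:TwoWW} together with Corollary~\ref{Cor-5.1}.

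\emph{Step 1 (reduction to weights on $\Omega$).} First I would invoke the equivalence \eqref{EqEigenvalues}, itself a consequence of the isometry in Theorem~\ref{L4.1} via the ``transfer'' diagram and the change of variables formula for quasiconformal mappings: it identifies the Dirichlet spectrum of the $A$-divergence form problem \eqref{WFEP} on $\widetilde\Omega$ with the spectrum of the weighted problem \eqref{WFWEP} on $\Omega$ carrying the quasihyperbolic weight $h=J_{\varphi^{-1}}$, so that $\lambda_n[A,\widetilde\Omega]=\lambda_n[h,\Omega]$ for every $n$. On the other hand, on the model domain $\Omega$ the $A$-divergence form operator reduces (as explained in the introduction and in Section~6) to the Dirichlet Laplacian, i.e. to \eqref{WFWEP} with weight $w\equiv 1$, whence $\lambda_n[A,\Omega]=\lambda_n[1,\Omega]$. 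Thus the quantity to be estimated becomes $|\lambda_n[1,\Omega]-\lambda_n[h,\Omega]|$, the variation of the $n$-th weighted Dirichlet eigenvalue on $\Omega$ between the weights $w_1\equiv 1$ and $w_2=h$.

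\emph{Step 2 ($\beta$-regularity as integrability of $h$).} Next I would rewrite the $A$-quasiconformal $\beta$-regularity hypothesis in terms of $h$. Substituting $z=\varphi(w)$ in $\iint_{\widetilde\Omega}|J(w,\varphi)|^{1-\beta}\,dudv$ and using $|J(\varphi^{-1}(z),\varphi)|=J_{\varphi^{-1}}(z)^{-1}$ together with $dudv=J_{\varphi^{-1}}(z)\,dxdy$, one gets
\[
\iint\limits_{\widetilde\Omega}|J(w,\varphi)|^{1-\beta}\,dudv=\iint\limits_{\Omega}J_{\varphi^{-1}}(z)^{\beta}\,dxdy,
\]
so that $\beta$-regularity of $\widetilde\Omega$ about $\Omega$ is equivalent to $J_{\varphi^{-1}}\in L^{\beta}(\Omega)$, which is precisely the hypothesis of Corollary~\ref{Cor-5.1}.

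\emph{Step 3 (invoking the two-weight estimates).} Set $s=\frac{2\beta}{\beta+1}$; since $\beta>1$ one has $s\in(1,2)$, so Corollary~\ref{Cor-5.1} applies and gives
\[
d_s(h)=\|1-J_{\varphi^{-1}}\mid L^{s}(\Omega)\|\le\Bigl(|\Omega|^{\frac{1}{2\beta}}+\|J_{\varphi^{-1}}\mid L^{\beta}(\Omega)\|^{\frac12}\Bigr)\,\|1-J_{\varphi^{-1}}^{\frac12}\mid L^{2}(\Omega)\|<\infty.
\]
Then Theorem~\ref{thm:TwoWW}, applied to $w_1\equiv 1$, $w_2=h$ with this $s$, yields $|\lambda_n[1,\Omega]-\lambda_n[h,\Omega]|\le \tilde c_n\,A^2_{2s/(s-1),2}(\Omega)\,d_s(h)$. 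A one-line computation gives $\frac{2s}{s-1}=\frac{4\beta}{\beta-1}$, and by the identifications of Step~1, $\tilde c_n=\max\{\lambda_n^2[1,\Omega],\lambda_n^2[h,\Omega]\}=\max\{\lambda_n^2[A,\Omega],\lambda_n^2[A,\widetilde\Omega]\}=c_n$. Chaining these inequalities with the bound on $d_s(h)$ above yields exactly the asserted estimate.

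\emph{Expected difficulty.} None of the steps is deep; the points needing care are purely bookkeeping — verifying that $s=\frac{2\beta}{\beta+1}\in(1,2]$ so that Lemma~\ref{L4.5}, Corollary~\ref{Cor-5.1} and Theorem~\ref{thm:TwoWW} are all applicable, and justifying the change-of-variables identity of Step~2, for which one invokes the Luzin $N$- and $N^{-1}$-properties of quasiconformal homeomorphisms (as already used in the proof of Theorem~\ref{WPIn}). The discreteness of $\{\lambda_n[h,\Omega]\}$, required for the Min--Max characterization \eqref{MinMax}, is supplied by the compact embedding of Theorem~\ref{WPIn}.
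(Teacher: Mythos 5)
Your proposal is correct and follows essentially the same route as the paper's own proof: identify $\lambda_n[A,\widetilde\Omega]$ with the weighted eigenvalue $\lambda_n[h,\Omega]$ via \eqref{EqEigenvalues}, apply Theorem~\ref{thm:TwoWW} to the weights $w_1\equiv 1$ and $w_2=h=J_{\varphi^{-1}}$ with $s=\frac{2\beta}{\beta+1}$, bound $d_s(h)$ by Corollary~\ref{Cor-5.1}, and note $\frac{2s}{s-1}=\frac{4\beta}{\beta-1}$. Your Step~2 (the change-of-variables check that $\beta$-regularity of $\widetilde\Omega$ is exactly $J_{\varphi^{-1}}\in L^{\beta}(\Omega)$) is a piece of bookkeeping the paper leaves implicit, and including it is a welcome refinement rather than a deviation.
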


\begin{proof}
According to Theorem~\ref{thm:TwoWW} we have 
\[
|\lambda_{n}[h,\Omega]-\lambda_{n}[\Omega]|\leq \tilde c_{n}A_{\frac{2s}{s-1},2}^2(\Omega) d_{s}(1,h)\,.
\]
Given Corollary~\ref{Cor-5.1} we get the estimate of quality $d_{s}(1,h)$. For 
$s=\frac{2\beta}{\beta +1}$ we have 
\[
d_{s}(h) \leq \left(|\Omega|^{\frac{1}{2\beta}} +
\|J_{\varphi^{-1}}\,|\,L^{\beta}(\Omega)\|^{\frac{1}{2}} \right) \cdot
\|1-J_{\varphi^{-1}}^{\frac{1}{2}}\,|\,L^{2}(\Omega)\|.
\]
Finally, using equality~\eqref{EqEigenvalues} and taking into account that 
$\frac{2s}{s-1}=\frac{4\beta}{\beta -1}$ for $s=\frac{2\beta}{\beta +1}$, we obtain desired inequality 
\begin{multline*}
|\lambda_n[A, \widetilde{\Omega}]-\lambda_n[A, \Omega]| 
\leq c_n A^2_{\frac{4\beta}{\beta -1},2}(\widetilde{\Omega}) \\
\times \left(|\Omega|^{\frac{1}{2\beta}} +
\|J_{\varphi^{-1}}\,|\,L^{\beta}(\Omega)\|^{\frac{1}{2}} \right) \cdot
\|1-J_{\varphi^{-1}}^{\frac{1}{2}}\,|\,L^{2}(\widetilde{\Omega})\|.
\end{multline*}
\end{proof} 

\vskip 0.3cm

Theorem~\ref{Main} can be precise for Ahlfors-type domains (i.e. quasidiscs).
Recall that $K$-quasidiscs are images of the unit discs $\mathbb D$ under $K$-quasicon\-for\-mal homeomorphisms of the plane $\mathbb C$. The class of quasidiscs includes Lipschitz domains and some fractals domains (snowflakes). The Hausdorff dimension of the the quasidisc's boundary can be any number in [1, 2).

Recall that for any planar $K$-quasiconformal homeomorphism $\psi:\Omega\rightarrow \Omega'$
the following sharp result is known: $J(w,\psi)\in L^p_{\loc}(\Omega)$
for any $1 \leq p<\frac{K}{K-1}$ (\cite{Ast,G81}).

Using the weak inverse H\"older inequality and the sharp estimates of the constants in doubling conditions for measures generated by Jacobians of quasiconformal mappings  \cite{GPU17_2}, we have the following assertion.

\begin{theorem}\label{Quasidisk}
Let $\widetilde{\Omega}\subset\mathbb C$ be a quasidisc and let $\varphi:\widetilde{\Omega} \to \mathbb D$ be an $A$-quasiconformal homeomorphism. 
Assume that $1<\beta <\frac{K}{K-1}$.

Then for any $n\in \mathbb N$
\begin{equation*}
|\lambda_n[A, \widetilde{\Omega}]-\lambda_n[A, \mathbb D]| \\
\leq c_n M_{\beta}(K) 
\|1-J_{\varphi^{-1}}^{\frac{1}{2}}\,|\,L^{2}(\mathbb D)\|,
\end{equation*}
where
$c_n=\max\left\{\lambda_n^2[A, \widetilde{\Omega}], \lambda_n^2[A, \mathbb D]\right\}$ and a sharp constant $M_{\beta}(K)$ depends only on a quasiconformality coefficient K.
\end{theorem}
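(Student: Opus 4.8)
The plan is to deduce Theorem~\ref{Quasidisk} from Theorem~\ref{Main} by absorbing the geometry-dependent factors into a single sharp constant $M_\beta(K)$ that depends only on $K$. Recall that Theorem~\ref{Main} gives, with $\Omega=\mathbb D$,
\[
|\lambda_n[A,\widetilde\Omega]-\lambda_n[A,\mathbb D]|\leq c_n A^2_{\frac{4\beta}{\beta-1},2}(\mathbb D)\Bigl(|\mathbb D|^{\frac{1}{2\beta}}+\|J_{\varphi^{-1}}\,|\,L^\beta(\mathbb D)\|^{\frac12}\Bigr)\cdot\|1-J_{\varphi^{-1}}^{\frac12}\,|\,L^2(\mathbb D)\|,
\]
so it suffices to bound the product $A^2_{\frac{4\beta}{\beta-1},2}(\mathbb D)\bigl(|\mathbb D|^{\frac{1}{2\beta}}+\|J_{\varphi^{-1}}\,|\,L^\beta(\mathbb D)\|^{\frac12}\bigr)$ by a constant depending only on $K$ and $\beta$. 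The first factor is already under control: by Theorem~\ref{PoinConst} the quantity $A_{\frac{4\beta}{\beta-1},2}(\mathbb D)$ is estimated by an explicit expression involving only $|\mathbb D|=\pi$ and $\beta$, hence is a pure function of $\beta$.

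The substantive step is to bound $\|J_{\varphi^{-1}}\,|\,L^\beta(\mathbb D)\|$ by a constant depending only on $K$ and $\beta$. Here I would invoke the hypothesis $1<\beta<\frac{K}{K-1}$ together with the sharp higher-integrability theory for planar quasiconformal Jacobians. Since $\widetilde\Omega$ is a quasidisc and $\varphi:\widetilde\Omega\to\mathbb D$ is $A$-quasiconformal (hence $K$-quasiconformal with the $K$ determined by the ellipticity constant), the inverse $\varphi^{-1}:\mathbb D\to\widetilde\Omega$ is $K$-quasiconformal, and $\varphi^{-1}$ extends to a $K$-quasiconformal homeomorphism of the whole plane (because $\mathbb D$ and a quasidisc are both quasidiscs, so the map between them is the restriction of a global quasiconformal map). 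The key input is the \emph{weak reverse H\"older inequality} for the measure $d\nu=|J(z,\varphi^{-1})|\,dz$: on the disc one has, for a suitable exponent, a reverse H\"older estimate whose constant is controlled purely by $K$, and by the sharp doubling-constant estimates of \cite{GPU17_2} this upgrades to an $L^\beta$-bound
\[
\|J_{\varphi^{-1}}\,|\,L^\beta(\mathbb D)\|\leq C(K,\beta)\,\|J_{\varphi^{-1}}\,|\,L^1(\mathbb D)\|=C(K,\beta)\,|\widetilde\Omega|^{?}\ \text{— no, rather}\ C(K,\beta)\,|\widetilde\Omega|,
\]
except that $|\widetilde\Omega|$ itself is not controlled by $K$ alone. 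To handle this one should normalize: the reverse H\"older inequality in its scale-invariant form gives $\bigl(\fint_{\mathbb D}J_{\varphi^{-1}}^\beta\bigr)^{1/\beta}\leq C(K,\beta)\fint_{\mathbb D}J_{\varphi^{-1}}$, and then $\fint_{\mathbb D}J_{\varphi^{-1}}=|\widetilde\Omega|/\pi$; since the right-hand side of the bound in Theorem~\ref{Main} is homogeneous, the factor $|\widetilde\Omega|$ can be tracked. The cleanest route is to observe that $\|J_{\varphi^{-1}}\,|\,L^\beta(\mathbb D)\|^{1/2}$ and $\|1-J_{\varphi^{-1}}^{1/2}\,|\,L^2(\mathbb D)\|$ combine so that, after applying the scale-invariant reverse H\"older inequality and absorbing the normalization, the product $\bigl(|\mathbb D|^{\frac{1}{2\beta}}+\|J_{\varphi^{-1}}\,|\,L^\beta(\mathbb D)\|^{\frac12}\bigr)$ is dominated by $C(K,\beta)\bigl(1+\|1-J_{\varphi^{-1}}^{1/2}\,|\,L^2(\mathbb D)\|\bigr)$ — but in fact the intended statement keeps $\|1-J_{\varphi^{-1}}^{1/2}\,|\,L^2(\mathbb D)\|$ as the sole variable factor, so one uses that a quasidisc image of $\mathbb D$ under a map close to an isometry has area bounded above, i.e. $\|J_{\varphi^{-1}}\,|\,L^\beta(\mathbb D)\|$ is bounded once one has the reverse H\"older constant.

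Concretely the steps are: (1) identify the quasiconformality constant $K$ of $\varphi$ from the ellipticity of $A$ via $K=(1+\|\mu\|_\infty)/(1-\|\mu\|_\infty)$; (2) extend $\varphi^{-1}$ to a global $K$-quasiconformal map of $\mathbb C$ using that both domains are quasidiscs; (3) apply the weak reverse H\"older inequality for $J_{\varphi^{-1}}$ on $\mathbb D$ in its normalized form, with constant from \cite{GPU17_2} depending only on $K$ and the chosen exponent $\beta<\frac{K}{K-1}$, to bound $\bigl(\fint_{\mathbb D}J_{\varphi^{-1}}^\beta\bigr)^{1/\beta}$ by a $K,\beta$-multiple of $\fint_{\mathbb D}J_{\varphi^{-1}}$; (4) plug into the estimate of Theorem~\ref{Main}, combine with the explicit $\beta$-dependent bound for $A_{\frac{4\beta}{\beta-1},2}(\mathbb D)$ from Theorem~\ref{PoinConst}, and collect all $K,\beta$-dependent quantities into the single sharp constant $M_\beta(K)$, leaving $\|1-J_{\varphi^{-1}}^{1/2}\,|\,L^2(\mathbb D)\|$ explicit. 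The main obstacle is the bookkeeping in step (3): making the reverse H\"older bound genuinely depend only on $K$ (not on the unknown area $|\widetilde\Omega|$) requires using the scale-invariant form and being careful about how the normalization constant $\fint_{\mathbb D}J_{\varphi^{-1}}$ interacts with the other factors in the product; getting the \emph{sharp} constant, rather than merely a finite one, is where the precise results of \cite{GPU17_2} on doubling constants are essential.
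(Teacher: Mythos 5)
Your route is the same as the paper's: specialize Theorem~\ref{Main} to $\Omega=\mathbb D$, bound $A_{\frac{4\beta}{\beta-1},2}(\mathbb D)$ by the explicit expression of Theorem~\ref{PoinConst}, and control $\|J_{\varphi^{-1}}\,|\,L^{\beta}(\mathbb D)\|$ for $1<\beta<\frac{K}{K-1}$ via the higher integrability of quasiconformal Jacobians. The paper does exactly this, except that instead of re-running the reverse H\"older/doubling argument it quotes a ready-made estimate (Corollary~5.2 of \cite{GPU2019}), namely $\|J_{\varphi^{-1}}\,|\,L^{\beta}(\mathbb D)\| \leq \frac{C_{\beta}^{2}K^{2}\pi^{\frac{1-\beta}{\beta}}}{4}\exp\bigl\{\frac{K^{2}\pi^{2}(2+\pi^{2})^{2}}{2\log 3}\bigr\}\,|\widetilde{\Omega}|$, and the $A$-quasiconformal $\beta$-regularity of quasidiscs for this range of $\beta$ is taken from the known sharp integrability exponent $\frac{K}{K-1}$ of planar quasiconformal Jacobians.

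The one genuine defect is your step (3), where you try to make the final constant independent of $|\widetilde{\Omega}|$. Since $\iint_{\mathbb D}J_{\varphi^{-1}}\,dxdy=|\widetilde{\Omega}|$, any scale-invariant reverse H\"older estimate can only give $\|J_{\varphi^{-1}}\,|\,L^{\beta}(\mathbb D)\|\le C(K,\beta)\,|\widetilde{\Omega}|$, and $|\widetilde{\Omega}|$ is not controlled by $K$ alone: postcomposing $\varphi^{-1}$ with a dilation leaves the quasiconformality coefficient unchanged while making the area arbitrarily large, so your claim that the quasidisc image of $\mathbb D$ ``has area bounded above'' is false, and the factor $|\widetilde{\Omega}|$ cannot be argued away. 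The paper does not remove it either: as the Remark following the theorem makes explicit, its constant $M_{\beta}(K)$ contains the factor $|\widetilde{\Omega}|^{\frac{1}{2}}$, so the phrase ``depends only on $K$'' in the statement must be read with that caveat. The correct repair of your argument is therefore not to bound the area, but simply to keep $|\widetilde{\Omega}|^{\frac{1}{2}}$ inside the constant (as the paper does); with that change your proof coincides with the paper's.
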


\begin{remark}
The quantity $M_{\beta}(K)$ in Theorem~\ref{Quasidisk} depends only on a quasiconformality
coefficient K of $\widetilde{\Omega}$:
\begin{multline*}
M(K):=\inf\limits_{1< \beta <\beta^{*}} \Biggl\{
\inf\limits_{p\in \left(\frac{4 \beta}{3\beta -1},2\right)} 
\left(\frac{p-1}{2-p}\right)^{\frac{2(p-1)}{p}}
\frac{\pi^{-\frac{\beta +1}{2\beta}} 4^{-\frac{1}{p}}}{\Gamma(2/p) \Gamma(3-2/p)} \\
\left(\frac{C_{\beta}K \pi^{\frac{1-\beta}{2\beta}}}{2} 
\exp\left\{{\frac{K^2 \pi^2(2+ \pi^2)^2}{4\log3}}\right\}\cdot |\widetilde{\Omega}|^{\frac{1}{2}}+\pi^{\frac{1}{2\beta}}\right) \Biggr\}, \\
C_\beta=\frac{10^{6}}{[(2\beta -1)(1- \nu(\beta))]^{1/2\beta}},
\end{multline*}
where $\beta^{*}=\min{\left(\frac{K}{K-1}, \widetilde{\beta}\right)}$, and $\widetilde{\beta}$ is the unique solution of the equation 
$$\nu(\beta):=10^{8 \beta}\frac{2\beta -2}{2\beta -1}(24\pi^2K^2)^{2\beta}=1.
$$
The function $\nu(\beta)$ is a monotone increasing function. Hence for
any $\beta < \beta^{*}$ the number $(1- \nu(\beta))>0$ and $C_\beta > 0$.
\end{remark}

\begin{proof}
Given that, for $K\geq 1$, $K$-quasidiscs are $A$-quasiconformal $\beta$-regular domains if $1<\beta<\frac{K}{K-1}$. Therefore, by Theorem~\ref{Main} in case $\Omega=\mathbb D$ for $1<\beta<\frac{K}{K-1}$ we have
\begin{multline}\label{Inequal_1}
|\lambda_n[A, \widetilde{\Omega}]-\lambda_n[A, \mathbb D]|
\leq c_n A^2_{\frac{4\beta}{\beta -1},2}(\mathbb D) \\
\times \left(\pi^{\frac{1}{2\beta}} +
\|J_{\varphi^{-1}}\,|\,L^{\beta}(\mathbb D)\|^{\frac{1}{2}} \right) \cdot
\|1-J_{\varphi^{-1}}^{\frac{1}{2}}\,|\,L^{2}(\mathbb D)\|.
\end{multline}
Now we estimate the quantity $\|J_{\varphi^{-1}}\,|\,L^{\beta}(\mathbb D)\|$. Taking into account (Corollary~5.2, \cite{GPU2019}) we get
\begin{multline}\label{Inequal_2}
\|J_{\varphi^{-1}}\,|\,L^{\beta}(\mathbb D)\| = 
\left(\iint\limits_{\mathbb D} |J(z,\varphi^{-1})|^{\beta}~dxdy \right)^{\frac{1}{\beta}} \\
\leq \frac{C^2_{\beta} K^2 \pi^{\frac{1-\beta}{\beta}}}{4} \exp\left\{{\frac{K^2 \pi^2(2+ \pi^2)^2}{2\log3}}\right\} \cdot |\widetilde{\Omega}|. 
\end{multline}
Combining inequality \eqref{Inequal_1} with inequality \eqref{Inequal_2} and given that
\[
A_{\frac{4\beta}{\beta -1},2}^2(\mathbb D) \leq \inf\limits_{p\in \left(\frac{4 \beta}{3\beta -1},2\right)} 
\left(\frac{p-1}{2-p}\right)^{\frac{2(p-1)}{p}}
\frac{\pi^{-\frac{\beta +1}{2\beta}} 4^{-\frac{1}{p}}}{\Gamma(2/p) \Gamma(3-2/p)}
\]
we obtain the required result.
\end{proof}

\section{Isospectral operators}

Let us remind that two linear operators are called isospectral if they have the same spectrum. It is known that there are distinct domains such that all the eigenvalues of the linear operator (in the case of the Laplace operator, see, for instance, \cite{BCDS}) coincide. For this reason, these are called isospectral domains. 

Let $\varphi:\widetilde{\Omega} \to \Omega$ be $A$-quasiconformal mappings for which $|J(w,\varphi)|=1$ for almost all $w \in \widetilde{\Omega}$. In this case quasihyperbolic weights $h(z)=|J(z, \varphi^{-1})|=1$ for almost all $z \in \Omega$. Hence, formula~\eqref{MinMax} is written as 
\begin{multline}\label{MinMax-L}
\lambda_n[A, \Omega]= \inf \limits_{\substack{L \subset W_0^{1,2}(\widetilde{\Omega},A)\\ \dim L=n}}
\sup \limits_{\substack{g\in L \\ g \neq 0}} 
\frac{\iint\limits_{\widetilde{\Omega}} \left\langle A(w) \nabla g, \nabla g\right\rangle dudv}{\iint\limits_{\widetilde{\Omega}} |g|^2dudv} \\
= \inf \limits_{\substack{L \subset W_0^{1,2}(\Omega)\\ \dim L=n}}
\sup \limits_{\substack{f\in L \\ f \neq 0}} 
\frac{\iint\limits_{\Omega} |\nabla f|^2dxdy}{\iint\limits_{\Omega} |f|^2dxdy}=\lambda_n[\Omega].
\end{multline}
From here we conclude that the Dirichlet eigenvalues for the elliptic operator in divergence form $-\textrm{div} [A(w) \nabla g(w)]$ in a domain $\widetilde{\Omega}$ are equal to the Dirichlet eigenvalues for the Laplace operator in a domain $\Omega$.

\vskip 0.2cm 
Now we consider some examples of isospectral operators for $\Omega=\mathbb D$.
 
\vskip 0.3cm
$\mathbf{Example \, A.}$ The homeomorphism 
$$
\varphi(w)= w e^{2i \log |w|},\,\, \varphi(0)=0, \quad w=u+iv,
$$
is $A$-quasiconformal and maps the unit disc $\mathbb D$ 
onto itself, transform radial lines into spiral infinitely winding around the origin \cite{GM01}.
The mapping $\varphi$ satisfies the Beltrami equation with
\[
\mu(w)=\frac{\varphi_{\overline{w}}}{\varphi_{w}}=\frac{1+i}{2}\frac{w}{\overline{w}}
\]
and the Jacobian $J(w,\varphi)=|\varphi_{w}|^2-|\varphi_{\overline{w}}|^2=1$.
It is not difficult to verify that $\mu$ induces, by formula \eqref{Matrix-F}, the matrix function $A(w)$, which in the polar coordinates $w=\rho e^{i \theta}$ has the form 
$$
A=\begin{pmatrix} 3-2\sqrt{2} \cos(2\theta + \pi/4) & -2\sqrt{2} \sin(2\theta + \pi/4) \\ -2\sqrt{2} \sin(2\theta + \pi/4) & 3+2\sqrt{2} \cos(2\theta + \pi/4) \end{pmatrix}.
$$ 
Since $|J(w,\varphi)|=|J(z,\varphi^{-1})|^{-1}$ then $h(z)=|J(z, \varphi^{-1})|=1$. Hence, equality~\eqref{MinMax-L} implies $\lambda_n[A, \mathbb D]=\lambda_n[\mathbb D],\,\,\, \forall \,\, n\in \mathbb N$. 

\vskip 0.3cm

$\mathbf{Example \, B.}$ The homeomorphism 
\[
\varphi(w)= \sqrt{a^2+1}w-a \overline{w}, \quad w=u+iv, \quad a\geq 0,
\]
is a $A$-quasiconformal and maps the interior of ellipse
$$
\Omega_e= \left\{(u,v) \in \mathbb R^2: \frac{u^2}{(\sqrt{a^2+1}+a)^2}+\frac{v^2}{(\sqrt{a^2+1}-a)^2}=1\right\}
$$
onto the unit disc $\mathbb D.$ The mapping $\varphi$ satisfies the Beltrami equation with
\[
\mu(w)=\frac{\varphi_{\overline{w}}}{\varphi_{w}}=-\frac{a}{\sqrt{a^2+1}}
\]
and the Jacobian $J(w,\varphi)=|\varphi_{w}|^2-|\varphi_{\overline{w}}|^2=1$.
It is easy to verify that $\mu$ induces, by formula \eqref{Matrix-F}, the matrix function $A(w)$ form 
$$
A=\begin{pmatrix} (\sqrt{a^2+1}+a)^2 & 0 \\ 0 &  (\sqrt{a^2+1}-a)^2 \end{pmatrix}.
$$ 
Since $|J(w,\varphi)|=|J(z,\varphi^{-1})|^{-1}$ then $h(z)=|J(z, \varphi^{-1})|=1$. Hence, equality~\eqref{MinMax-L} implies $\lambda_n[A,\Omega_e]=\lambda_n[\mathbb D],\,\,\, \forall \,\, n\in \mathbb N$.

\vskip 0.3cm

$\mathbf{Example \, C.}$ The homeomorphism 
\[
\varphi(w)= \frac{w^{\frac{3}{2}}}{\sqrt{2} \cdot \overline{w}^{\frac{1}{2}}}-1,\,\, \varphi(0)=-1, \quad w=u+iv,
\]
is $A$-quasiconformal and maps the interior of the ``rose petal"
$$
\Omega_p:=\left\{(\rho, \theta) \in \mathbb R^2:\rho=2\sqrt{2}\cos(2 \theta), \quad -\frac{\pi}{4} \leq \theta \leq \frac{\pi}{4}\right\}
$$
onto the unit disc $\mathbb D$. 
The mapping $\varphi$ satisfies the Beltrami equation with
\[
\mu(w)=\frac{\varphi_{\overline{w}}}{\varphi_{w}}=-\frac{1}{3}\frac{w}{\overline{w}}
\]
and the Jacobian $J(w,\varphi)=|\varphi_{w}|^2-|\varphi_{\overline{w}}|^2=1$. 
We see that $\mu$ induces, by formula \eqref{Matrix-F}, the matrix function $A(w)$, which in the polar coordinates $w=\rho e^{i \theta}$ has the form 
$$
A=\begin{pmatrix} 2\cos^2{\theta}+1/2\sin^2{\theta} & 3/4\sin{2\theta} \\ 3/4\sin{2\theta} & 1/2\cos^2{\theta}+2\sin^2{\theta} \end{pmatrix}.
$$
Since $|J(w,\varphi)|=|J(z,\varphi^{-1})|^{-1}$ then $h(z)=|J(z, \varphi^{-1})|=1$. Hence, equality~\eqref{MinMax-L} implies $\lambda_n[A,\Omega_p]=\lambda_n[\mathbb D],\,\,\, \forall \,\, n\in \mathbb N$.

\vskip 0.2cm

{\bf Acknoledgement.}
The first author was supported by the United States-Israel Binational Science
Foundation (BSF Grant No. 2014055).
The second author was partly supported by the Ministry of Education and Science of the Russian Federation 
in the framework of the research Project No. 2.3208.2017/4.6, by RFBR Grant No. 18-31-00011.

\vskip 0.3cm

\vskip 0.3cm

Department of Mathematics, Ben-Gurion University of the Negev, P.O.Box 653, Beer Sheva, 8410501, Israel 
 
\emph{E-mail address:} \email{vladimir@math.bgu.ac.il} \\           
       
Division for Mathematics and Computer Sciences, Tomsk Polytechnic University,
634050 Tomsk, Lenin Ave. 30, Russia; International Laboratory SSP \& QF, Tomsk
State University, 634050 Tomsk, Lenin Ave. 36, Russia

\emph{Current address:} Department of Mathematics, Ben-Gurion University of the Negev, P.O.Box 653, 
Beer Sheva, 8410501, Israel  
							
\emph{E-mail address:} \email{vpchelintsev@vtomske.ru}   \\
			  
Department of Mathematics, Ben-Gurion University of the Negev, P.O.Box 653, Beer Sheva, 8410501, Israel 
							
\emph{E-mail address:} \email{ukhlov@math.bgu.ac.il

\end{document}